\documentclass[11pt,onecolumn,draftcls]{IEEEtran} 
\ifCLASSINFOpdf
\else
\fi
%
%

%
\usepackage[cmex10]{amsmath}
%

\usepackage{amssymb,amsthm}
\usepackage[utf8]{inputenc}
\usepackage[T1]{fontenc}
\usepackage{lmodern}          
\usepackage[ngerman,english]{babel}
\usepackage{pict2e}
\usepackage[pdftex]{graphicx}
\usepackage{subfigure}
\usepackage{listings} \lstset{numbers=left, numberstyle=\tiny, numbersep=5pt} \lstset{language=Matlab,basicstyle=\small} 
\usepackage{listliketab}
\usepackage{color}
\usepackage{verbatim} 
\usepackage{changes}
\usepackage{selinput} 

\usepackage{booktabs}

\newtheorem{theorem}{Theorem}
\newtheorem{lemma}[theorem]{Lemma}

\theoremstyle{definition}
\newtheorem{definition}[theorem]{Definition}

\theoremstyle{remark}

\def\N{{\mathbb N}} 
\def\Z{{\mathbb Z}}

\def\R{{\mathbb R}}

\def\Norm{\Vert}

\def\skpl{\langle}      
\def\skpr{\rangle}      

\newcommand{\cO}{\mathcal{O}}
\newcommand{\cR}{\mathcal{R}}
\newcommand{\gr}{\mathrm{gr\,}}
\newcommand{\st}{\,:\,}
\newcommand{\be}{\begin{equation}}
\newcommand{\ee}{\end{equation}}

\newcommand{\inn}[2]{\langle #1,#2 \rangle}
\newcommand{\ml}{\vskip 4pt\noindent}

\hyphenation{op-tical net-works semi-conduc-tor}

\begin{document}

%
\title{Multigrid Convergence for the MDCA-Curvature Estimator}
%
%
%

\author{Andreas~Schindele,
        Peter~Massopust
        and~Brigitte~Forster,~\IEEEmembership{Member,~IEEE}
\thanks{This work was partially supported by the DFG grants FO 792/2-1 and MA 5801/2-1 awarded to Brigitte Forster and to Peter {Massopust}.}
\thanks{A. Schindele is with the Faculty of Mathematics and Computer Science, Julius-Maximilians Universit\"at W\"urzburg, Germany (e-mail: andreas.schindele@mathematik.uni-wuerzburg.de).}
\thanks{B. Forster is with the Faculty of Computer Science and Mathematics, Universit\"at Passau, Germany (e-mail: brigitte.forster@uni-passau.de).}
\thanks{P. Massopust is with the Helmholtz Zentrum M\"unchen and with the Department of Mathematics, Technische Universit\"at M\"unchen, Germany (e-mail: massopust@ma.tum.de).}
}

%
%

\markboth{IEEE Transactions on Image Processing}%
{Schindele \MakeLowercase{\textit{et al.}}: Multigrid Convergence}
%



\maketitle

\begin{abstract}
We consider the problem of estimating the curvature profile along the boundaries of digital objects in segmented black-and-white images. We start with the curvature estimator proposed by Roussillon et al.,
which is based on the calculation of \emph{maximal digital circular arcs} (MDCA). We extend this estimator to the $\lambda$-MDCA curvature estimator that considers several MDCAs for each boundary pixel and is therefore smoother than the classical MDCA curvature estimator.

We prove an explicit order of convergence result 
for convex subsets in $\mathbb{R}^2$ with positive, continuous curvature profile. In addition, we evaluate the curvature estimator on various objects with known curvature profile. We show that the observed order of convergence is close to the theoretical limit of $\mathcal{O}(h^{\frac13})$. Furthermore, we establish that the $\lambda$-MDCA curvature estimator outperforms the MDCA curvature estimator, especially in the neighborhood of corners.
\end{abstract}

\begin{IEEEkeywords}
Curvature, maximal digital arc, curvature estimator, multigrid convergence
\end{IEEEkeywords}

%
\IEEEpeerreviewmaketitle

\section{Curvature detection -- a task for image analysis}

Line, circle and more generally edge detection methods are well-known and well-understood tasks in image processing. Currently, more sophisticated applications aim at extracting valuable information from the detected edges, e.g., their curvature profile. The idea is to use this additional information for image analysis or interpretation purposes.

Young et al. \cite{nbe} use the curvature of a segmented shape to automatically derive the so-called bending energy for biological shape analysis. The bending energy aims at modeling a restricting parameter in developmental processes (e.g. blood cells \cite{canham}), or as van Vliet et al. \cite{nbe2} put it, to ``quantify the energy stored in the shape of the contour.''  Duncan et al. \cite{heart} use the bending energy to analyze and quantify the shape characteristics of parts of the endocardial contour. The concept of bending energy is also applied in the quantitative shape analysis of cancer cells \cite{cell, cell2}, leaf phenotyping \cite{backhaus}, lipid layers \cite{sodt}, and others. Applications of curvature profiles are found in face recognition \cite{face} or diatom identification \cite{kiesel} which is widely used to monitor water quality or environmental changes.  For all these applications, it is crucial to obtain a best possible curvature estimation from the given digitized data.  This, however, is a nontrivial task since through the discretization process information and resolution are lost. Therefore, upper and lower bounds for curvature estimates derived from digital data are needed.

To this end, we consider the novel curvature estimator introduced by Roussillon et al. \cite{rouss}. This estimator is based on the calculation of Maximal Digital Circular Arcs (MDCA) fitted to a curve yielding an estimate for the curvature via the inverse radius of the osculating circle. The performance of this technique is compared to other estimation techniques in \cite{multi}.  In \cite{rouss}, a first convergence result for multigrid estimation is proved and in \cite{multi} the order of convergence is given by $\Omega ( h^{\frac1 2})$ as the grid size $h \to 0$. However, for application purposes, some prerequisites are not directly verifiable, e.g., the behavior of the length of a covering ring segment and its relation to the grid size of the discretization. In this article, we reconsider the proof of the result in \cite{multi}, extend it to the sharper $\mathcal{O}(h^{\frac 1 3})$ convergence rate, and replace the original assumptions by more easily accessible and verifiable conditions. Moreover, we introduce an extension of the MDCA-estimator, namely the $\lambda$-MDCA estimator, which averages the results of the MDCA-procedure. This has the effect that in neighborhoods of critical points, such as edges, the precision of the estimation results is increased. Numerical simulations illustrate our convergence result and encourage further enquiry into $\lambda$-averaging.

The article is organized as follows: In Sections \ref{diffgeo} and \ref{sec imaging}, we briefly revisit the required notions from differential geometry and image processing. In Section \ref{sec Main Task}, we formulate the main task: the optimal estimation of the curvature profile. Section \ref{MDCAchap} introduces the MDCA und the $\lambda$-MDCA estimator. The proofs of the multigrid convergence rate and of the main results are presented in Section \ref{errest}. Experimental validations are offered in Section \ref{ergeb} and the Conclusions \ref{sec conclusions} close the article.

\section{Brief Review of Differential Geometry}\label{diffgeo}

In this section, we briefly introduce those basic concepts of differential geometry that are relevant for the remainder of this paper. For more details and proofs, the interested reader is referred to, for instance, \cite{docarmo,kuehnel,montiel}.

We consider twice continuously differentiable mappings $\gamma : [a,b]\to \R^2$, so-called planar Frenet curves. A Frenet curve $\gamma$ is called piecewise Frenet if it is a Frenet curve on each of its subdomains. The tangent vector to a curve $\gamma$ is defined by $\dot{\gamma} (t) := \frac{d\gamma}{dt}(t)$. The set $\gr \gamma:=\{\gamma(t)\st t\in[a,b]\}$ denotes the graph of $\gamma$. A curve $\gamma$ is regular if $\dot{\gamma}(t) \neq 0$, for all $t\in [a,b]$. For regular curves, we define the length of $\gamma$ to be the integral 
\[
L_{\gamma} := \int_a^b \Norm\dot{\gamma}(t)\Norm dt.
\]
For each $t\in [a,b]$, the arc length of a regular curve $\gamma$ is defined by $$s(t) := \int_a^t \Norm\dot{\gamma}(t)\Norm dt.$$ Every regular curve can be parametrized by arc length $s$. In this case, the norm of the tangent vector $\|\frac{d}{ds}{\gamma}(s)\|$ equals one. In the following, we denote the derivative of a regular curve $\gamma$ with respect to the arc length parameter $s$ by $\gamma'(s)$. 

The normal $N(s)$ is the normalized vector orthogonal to the tangent $\gamma'(s)$, such that the vector set $(\gamma'(s), N(s))$ has the same orientation as the canonical basis of $\R^{2}$. The curvature $\kappa(s)$ of the curve $\gamma$ is then defined by 
\begin{equation}\label{eq Def Kruemmung}
\gamma''(s) = \kappa(s) N(s).
\end{equation}
Therefore, the curvature takes on positive as well as negative values.  
As a consequence, $$\kappa (s) = \inn{N(s)}{\gamma''(s)} = \pm \|\gamma''(s)\|.$$ 
Using the fact that the normal vector $N$ is orthogonal to the tangent vector of $\gamma$ yields $$\kappa(s)=- \inn{N'(s)}{\gamma'(s)}.$$

For each planar Frenet curve with non-vanishing curvature $\kappa \neq 0$ the circle with center
$$
\gamma(s_{0}) + \frac{1}{\kappa(s_{0})} N(s_{0})
$$ 
and radius
$$
R(s_{0}) = \frac{1}{\kappa(s_{0})}
$$
is called osculating circle $K(s_{0})$ of $\gamma$ at $s_{0}$.
$K(s_{0})$ approximates $\gamma$ at the point $s_{0}$ of second order $o(s^{2})$ for $s\to s_{0}$ and therefore is unique due to the Frenet equalities, see e.g. \cite{docarmo}. 
As a consequence, the curvature of a regular curve at the point $\gamma(s_{0})$ depends only on the graph $\gr\gamma$ and not on the particular parametrization.

We consider such a  curve $\gamma: [s_{1},s_{2}]\to \R^{2}$ parametrized by arc length $s$, and transform it into local polar coordinates with respect to the angle $\phi: [s_{1},s_{2}] \to \R$, such that the tangent and the normal satisfy
\begin{eqnarray}
\gamma'(s) & = &(\sin \phi(s), -\cos\phi(s))^{T}, \nonumber \\  
N(s) &=& (\cos \phi(s), \sin \phi(s))^{T}, \nonumber
\end{eqnarray}
and such that the curvature $\kappa$ doesn't vanish.
In this setting, the following formula for the curvature holds:
\begin{eqnarray}
\kappa(s_{0})& =& - \skpl N'(s_{0}), \gamma'(s_{0}) \skpr 
\nonumber \\
& =& - \skpl (- \sin \phi(s_{0}), \cos \phi(s_{0}) )^{T} , \gamma'(s_{0}) \skpr\,  \frac{d\phi}{ds}(s_{0})
\nonumber\\
&= & \frac{d\phi}{ds}(s_{0})
\label{eq Kruemmung in s}
\end{eqnarray}
for every $s_{0}\in\,  ]s_{1},s_{2}[$. 
Denote $h: [\phi_{1},\phi_{2}]\to [s_{1},s_{2}]$ the diffeomorphism that transforms polar coordinates into arc length coordinates. Then by \eqref{eq Kruemmung in s} 
$$
\kappa(h(\phi)) =  (h^{-1})'(h(\phi)).
$$
Hence by the derivative of the inverse function
\begin{eqnarray}
\int_{s_{1}}^{s_{2}}ds 
 &= & \int_{\phi_{1}}^{\phi_{2}} |h'(\phi)| d\phi
 \nonumber\\
 & = & \int_{\phi_{1}}^{\phi_{2}} \frac{1}{|(h^{-1})'(h(\phi))|}\, d\phi 
 \nonumber\\
 &= & \int_{\phi_{1}}^{\phi_{2}}  \frac{1}{|\kappa(h(\phi))|}\, d\phi.
 \label{eq trmcurv}
\end{eqnarray}
As a special case, the entire length of the curve is given by
$$
L (\gamma) = \int_{0}^{L(\gamma)}ds = \int_{h^{-1}(0)}^{h^{-1}(L (\gamma))} \frac{1}{|\kappa(h(\phi))|}\, d\phi.
$$

\section{Image model: Segmented images and their boundaries}
\label{sec imaging}

We assume that we have an already segmented scene coming from some real-world image, i.e., we are considering continuously defined black and white images
\begin{align}
I: [\,0,1\,]^{2} \to \{0,1\}.
\label{eq Kontinuierlich definiertes Bild}
\end{align}
We consider discretizations into \emph{pixels} (or rather \emph{grid squares}) $\{\Omega_{h}(i,j)\}_{i,j}$
of size $h \times h$, i.e., for the grid resolution $\frac 1 h>0$. In our case, the resulting discretized images are digital images, i.e., quantized. 
The center of a pixel is called  the \emph{grid point} and is denoted by $p_{h}(i,j)$.
\begin{align*}
N_4 &(p_h(i,j)) := \\
&\{p_h(i+1,j),p_h(i,j+1),p_h(i-1,j), p_h(i,j-1)\}
\end{align*}
is called 4-neighborhood of the grid point $p_h(i,j)$. We call two grid points $p$ and $q$ neighbors, if $p\in N_4(q)$, or symmetrically, $q \in N_{4}(p)$. In the same way, the 4-neighborhood of the corresponding pixels is defined \cite{klettebuch}.

A digital path is a sequence $p_{0}, \ldots p_{n}$ of pixels  (or a sequence of grid points), such that $p_{i}$ is  neighbor of $p_{i-1}$ for all $1 \leq i \leq n$.

A subset $P\subset\Omega_h$ of the digital image is called \emph{connected}, if every pair of pixels $p,q \in P$ can be joined by a digital path which lies completely in $P$. Maximal connected sets are called \emph{connected components}.

The sides of a pixel are called \emph{grid edges}, and the corners of each pixel are called \emph{grid vertices}.
The boundary of a connected component $P$ is the set of grid edges, which belong to a pixel in $P$, as well as to a pixel not in $P$.

To discretize the continuously defined image $I$ in (\ref{eq Kontinuierlich definiertes Bild}) we consider the \emph{Gau\ss\ discretization} $D_{h}(I)$, which is the union of the pixels in the descretization of resolution $\frac 1 h>0$ with center points in $I$ \cite{klettebuch}. In the same way, we say that a pixel $p$ belongs to a connected component $S\subset I$, if its center point lies in $S$ (see Fig. \ref{object}).


\begin{figure}[h]
\begin{center}
\includegraphics[width=0.4\textwidth]{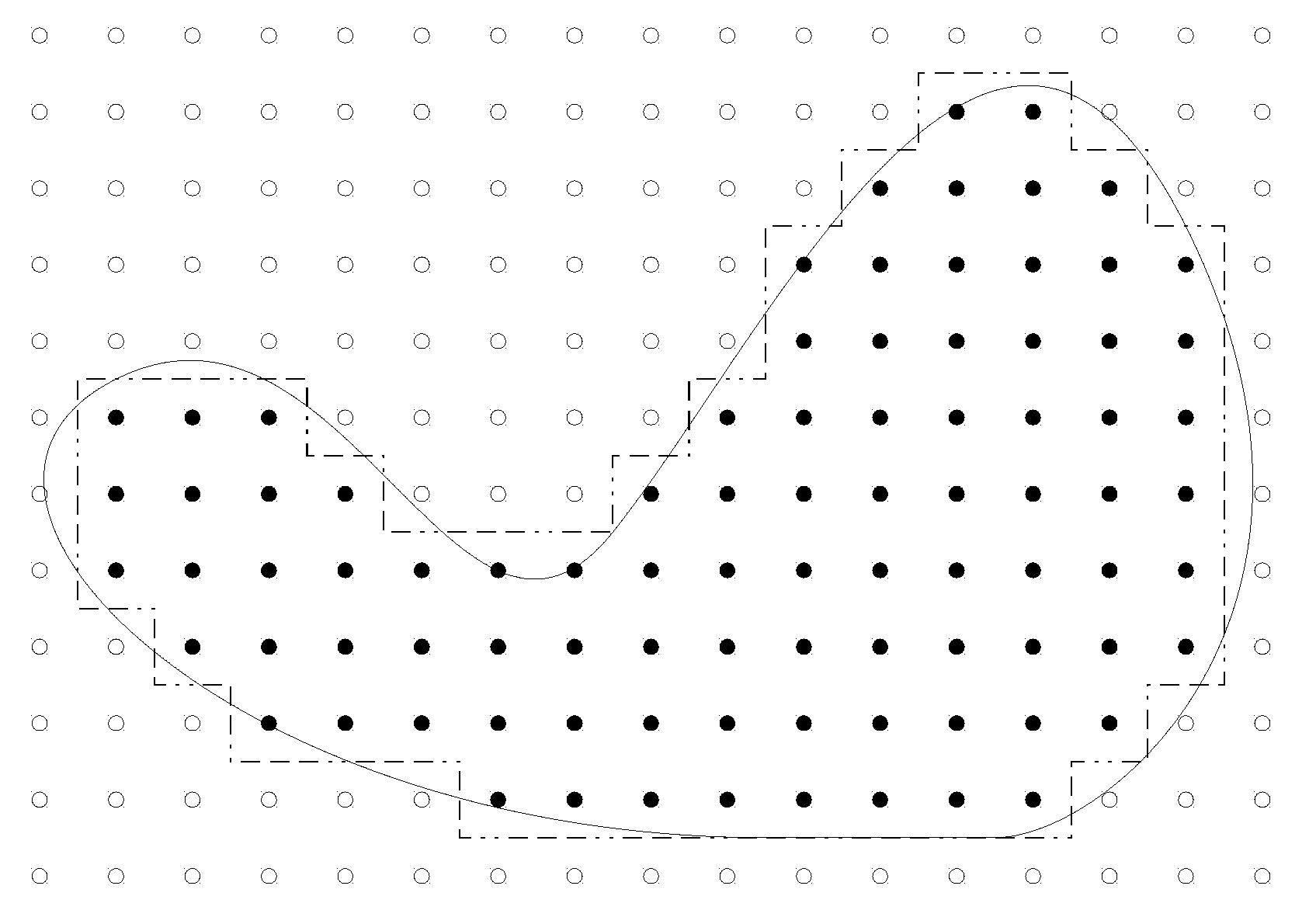}
\caption{An object bounded by the black line is discretized into pixels. The center points of the pixels are depicted by dots. The dots $\bullet$ are interior center points with respect to the object, the dots $\circ$ are the exterior center points. The boundary of the discretized object is the dashed line of grid edges.}
\label{object}
\end{center}
\end{figure}

\section{Formulation of the main task: Optimal estimation of the curvature profile}
\label{sec Main Task}

We now consider a digital image, which is a discretized version of the continuous, already segmented image $I$ in \ref{eq Kontinuierlich definiertes Bild}. We assume that we are interested in a certain object in that segmented image.
The real, exact object is given by the closed set $X\subset[\,0,1\,]^2$. 
For example, $X$ can be a segmented area or a curve. Our objective is the estimation of the curvature profile along the boundary $\partial X$ of $X$, which is the curvature along the curve $\gamma:[a,b]\rightarrow[\,0,1\,]^2$ with $\gamma([\, a,b\,])=\partial X$. 
If $X$ is a nonempty, bounded, and convex subset of $\mathbb{R}^2$ with boundary $\partial X$, and $\gamma$ a curve with $\gr \gamma\subset \partial X$, then $\gamma$ is called convex.

In the following we define the curvature at the point $x=\gamma(t)$ with respect to the set $X$ by $\kappa(x):=\kappa(X,x):=\kappa(t)$.
Then we can pose the problem as follows:
\begin{itemize}
\item \textbf{Assumptions:} We are given a nonempty, connected set of pixels $$D_h(X)\subset\Omega_h:=\{\Omega_h(i,j)\}_{i,j \subset K\times J \subset \Z^{2}},$$ which is achieved from the Gau\ss\ discretization of the object $X$ with resolution $1/h$.
\item\textbf{Objective:} Optimal estimation of the curvature profile $\kappa(X,x),\,x\in\partial X$, along the boundary $\partial X$ of the exact object $X$.
\end{itemize}

\section{The MDCA curvature estimator}
\label{MDCAchap}
\subsection{Definition and Basic Properties of the Maximal Digital Circular Arc (MDCA)}
\label{MDCAchap2}

We consider the Minkowski or city-block metric $d_{1}$: For two points $x = (x_{1},x_{2})$ and $y = (y_{1},y_{2})$, the metric is defined as
$$
d_{1}(x,y) := |x_{1}-y_{1}| + |x_{2}-y_{2}| =: \| x-y\|_{1}.
$$
In fact, on $\R^2$ and its subsets, the metric is a norm.

Let $(v_k)_{k\in\{1,\dots,N+1\}}$ be a finite sequence of grid vertices with
$\|v_k-v_{k+1}\|_{1}= h$ and $v_{k+2}\neq v_k$. 
We call $C=(e_k)_{k\in \{1,\dots,N\}}$ with
$e_k=[v_k,v_{k+1}]$ a \emph{digital curve} (with parameter $k$) of length $N$. 
$e_{1}$ is called initial edge of the digital curve; $e_{N}$ is the terminal edge.

The \emph{distance} $d$ between two edges $e_i$ and $e_j$ of a digital curve $C=(e_k)_{k=1,\dots,N}$ is defined by
\[
d(e_i,e_j):=|j-i|
\]
If $C$ is simple (i.e., branching index at most 2 for each point) and closed, we define
\[
d(e_i,e_j):=\min(|j-i|,N-|j-i|)
\]
Then obviously $d(e_1,e_N)=1$.

We assume that the boundary $\partial D_h(X)$ of the Gau\ss\ discretized object $X$ in our image is a simple closed digital curve $\partial D_h(X)$. 
Each edge $e_k\in \partial D_h(X)$ is located between two pixels, an inner pixel $p\in D_h(X)$ and an outer pixel $q\notin D_h(X)$.
Note that the digital curve is needed to be closed only to have the orientation in inner and outer pixels. Since digital curves can be closed in adding appropriate edges, our considerations are not limited to the case of closed curves. 

The make use of the following notions of circular separability and digital circular arcs from \cite{rouss}: 

\begin{definition}(Circular separable)
\label{D circular separable}
Two sets of points $P=\{p_1,\dots p_N\}\subset\mathbb{R}^2$ and $Q=\{q_1,\dots,q_M\}\subset \mathbb{R}^2$ are called \emph{circular separable}, 
if there is a radius $R\geq 0$ and $m\in \mathbb{R}^2$, such that for all $p\in P$ and $q\in Q$ it holds that
$\|m-p\|\leq R$ and $\|m-q\|\geq R$ (or $\|m-p\|\geq R$ and $\|m-q\|\leq R$).
\end{definition}

\begin{definition}\label{sep} (Digital Circular Arc DCA)
Each subset $B=\{e_k\in \partial D_h(X)  \mid k=1,\dots, N\} \subset \partial D_h(X) $ whose outer pixels and inner pixels are circular separable, is called \emph{digital circular arc} (DCA). 
\end{definition}

For our purposes we refine the notion of the digital circular arc in the following way:

\begin{definition}(Maximal Digital Circular Arc MDCA)
A digital circular arc $A\subset \partial D_h(X) $ is called \emph{maximal digital circular arc} (MDCA), if every set $B \subset \partial D_h(X)$ that contains $A$ -- i.e. $A \subset B \subset \partial D_h(X)$ -- is not circular separable.
\end{definition}

\begin{figure}[h]
\begin{center}
\includegraphics[width=0.4\textwidth]{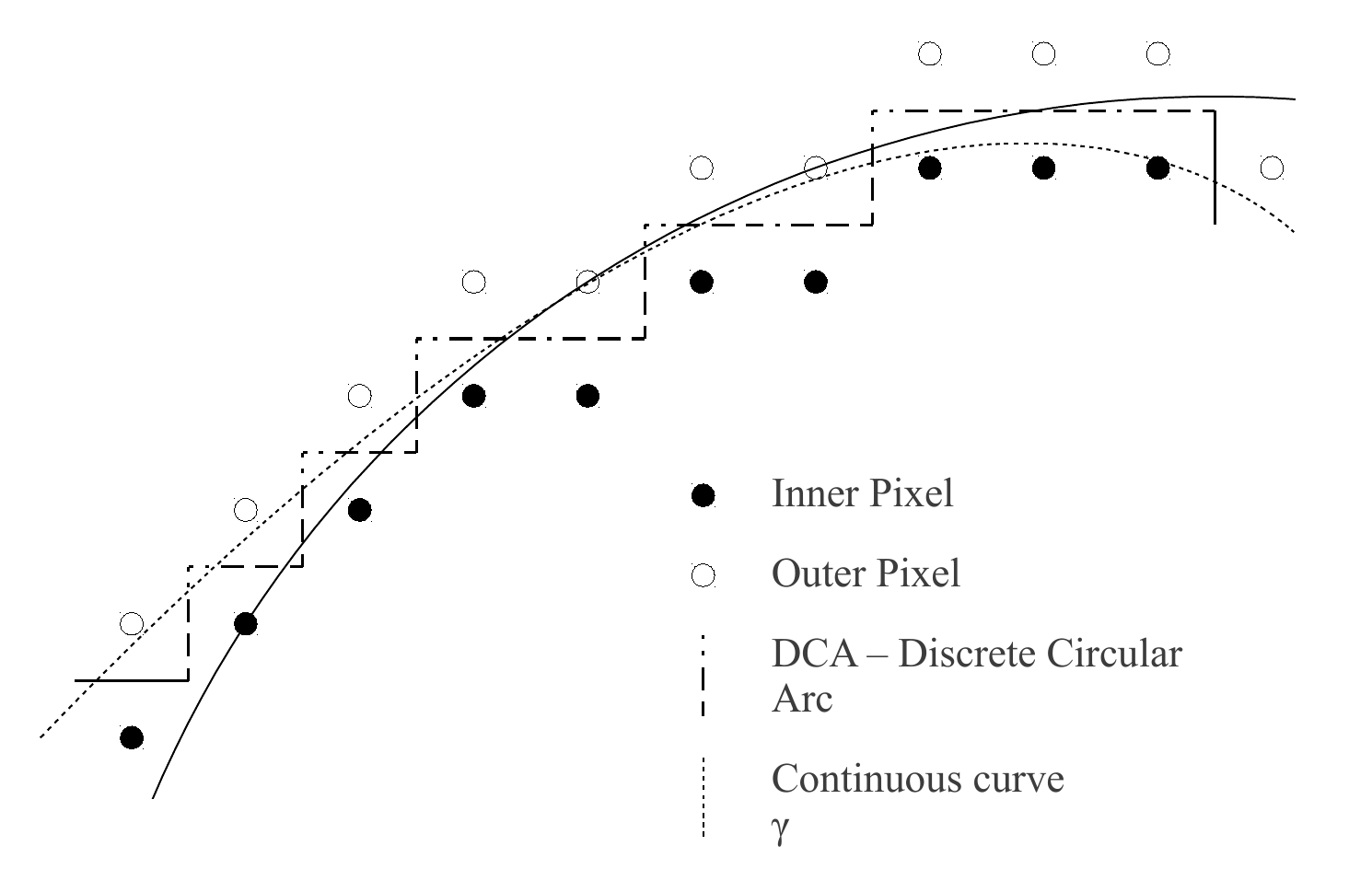}
\caption{
The digital circular arc (dashed) for the continuous curve $\gamma$ (dotted) separates into inner and outer pixels. For comparison, the separating circle is plotted as continuous curve.
}\label{dcagraph}
\end{center}
\end{figure}

Maximal digital circular arcs satisfy the following properties:

\begin{lemma}\label{MDC}\cite{rouss}
\begin{itemize}
\item[(i)] The set of MDCAs of a given digital curve $C$ is unique.
\item[(ii)] Different MDCAs have different sets consisting of the initial and terminal edges. 
\end{itemize}
\end{lemma}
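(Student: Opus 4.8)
The plan is to deduce both parts from a single structural fact: being a digital circular arc is \emph{hereditary} along the digital curve, i.e.\ every sub-arc (a block of consecutive edges) of a DCA is again a DCA. Here I use that the discretization $D_h(X)$ assigns to each boundary edge $e_k$ a well-defined inner pixel and a well-defined outer pixel, independently of which arc one regards $e_k$ as belonging to. Concretely, if $B=(e_k)_{k=i}^{j}$ is a DCA, witnessed by a circle with centre $m$ and radius $R$ separating the inner pixels of $B$ from its outer pixels in the sense of Definition~\ref{D circular separable}, and $B'=(e_k)_{k=i'}^{j'}$ with $i\le i'\le j'\le j$ (indices modulo $N$ if $C$ is closed), then the inner, respectively outer, pixels of $B'$ form a subset of those of $B$; hence the \emph{same} circle $(m,R)$ separates them, so $B'$ is a DCA. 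The base case is worth recording: a single edge $e_k$ has exactly one inner pixel $p$ and one outer pixel $q$ and is therefore always a DCA, e.g.\ with $m=p$ and $R=\|p-q\|$.

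For part~(i) I would argue that DCAs — and hence MDCAs — are intrinsic to $C$. Whether an arc is a DCA depends only on $C$ and on the inner/outer labelling induced by $D_h(X)$, not on a choice of traversal direction, since reversing the orientation merely interchanges the two pixel classes, to which Definition~\ref{D circular separable} is symmetric. Thus the family of all DCAs of $C$ is a well-defined finite set (finite because $C$ has only finitely many sub-arcs), and the MDCAs are, by definition, exactly its inclusion-maximal elements; this makes the collection of MDCAs unambiguously determined by $C$, as opposed to any order-dependent construction. Moreover, by the base case every edge lies in some DCA, and by heredity together with finiteness every DCA is contained in an MDCA, so the MDCAs exist and cover $C$.

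For part~(ii) I would play maximality against heredity. Suppose $A_1$ and $A_2$ are MDCAs sharing the same initial edge $e_i$; writing $A_1=(e_k)_{k=i}^{j_1}$ and $A_2=(e_k)_{k=i}^{j_2}$ with, say, $j_1\le j_2$, the arc $A_1$ is a sub-arc of $A_2$, so in particular $A_1\subseteq A_2$. Since $A_2$ is itself a DCA, maximality of $A_1$ forces $A_1=A_2$. The symmetric argument, comparing terminal edges, shows that two distinct MDCAs cannot share a terminal edge either. Hence distinct MDCAs have distinct initial edges and distinct terminal edges, so a fortiori their unordered pairs $\{$initial edge, terminal edge$\}$ are distinct, which is the assertion. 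In the closed case one first fixes the orientation, so that an arc is encoded by an initial edge together with a length $\ell\le N$; the inclusion $A_1\subseteq A_2$ then reads $\ell_1\le\ell_2$, and the same contradiction applies.

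The step I expect to require the most care is the heredity claim in the closed case and the treatment of the degenerate instances of Definition~\ref{D circular separable} — radius $R=0$, and the limiting case of a ``circle of infinite radius'' (a separating straight line) — since these decide whether the shortest arcs, in particular single edges, qualify as DCAs, and hence whether the MDCAs genuinely cover $C$. Once the inclusion order on sub-arcs is set up cleanly, the remainder is bookkeeping.
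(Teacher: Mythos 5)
The paper itself offers no proof of this lemma: it is quoted verbatim from Roussillon et al.\ \cite{rouss}, so there is no in-paper argument to compare yours against. On its merits, your route is the natural one and is sound in its core: the heredity observation (the inner and outer pixels of a sub-arc are subsets of those of the arc, and they are determined by each edge alone, so the same separating circle witnesses separability of the sub-arc) is correct, and it correctly drives both parts. For (i), the point of the lemma is precisely that the MDCAs are intrinsic to $C$ rather than an artifact of the greedy construction of Section \ref{ssec Computation of MDCAs and sets of MDCAs}, and your argument (DCAs are defined independently of traversal, MDCAs are the inclusion-maximal DCAs, hence the family is canonically determined; moreover by heredity and finiteness every edge lies in some MDCA) captures exactly that. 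For (ii), the nesting-plus-maximality argument — two MDCAs with the same initial edge are comparable under inclusion, so the paper's Definition of MDCA forces them to coincide — is correct, including your care about fixing the orientation in the closed case.

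The one genuine slip is the final ``a fortiori.'' From ``distinct MDCAs have distinct initial edges'' and ``distinct terminal edges'' it does \emph{not} follow that the unordered sets consisting of initial and terminal edge differ: the swapped configuration, in which the initial edge of $A_1$ equals the terminal edge of $A_2$ and the terminal edge of $A_1$ equals the initial edge of $A_2$ (two arcs of the closed curve overlapping exactly in their two end edges), is compatible with everything you proved, yet would make the two unordered pairs equal. If the lemma is read as a statement about the ordered pair (initial edge, terminal edge) — which is all the paper subsequently needs, since it only uses distinctness of initial edges to order the MDCAs — your proof is complete and even proves slightly more. If the literal unordered reading is intended, you still owe an argument excluding the swap case (or a remark that the statement should be understood for ordered pairs). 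Your treatment of the degenerate base case is fine as is: Definition \ref{D circular separable} allows $R\geq 0$ and either orientation of the separation, so a single edge is always a DCA.
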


As a consequence, the MDCAs of a digital curve $C = (e_{k})_{k=1,\ldots,N}$ can be ordered with respect to their initial edges, where the parametrization $k$ of the digital curve gives the ordering. 
We denote the ordered sequence of the MDCAs by $(A_l)_{l=1,\dots M}$, $M \in\N$.

\subsection{Computation of MDCAs and sets of MDCAs}
\label{ssec Computation of MDCAs and sets of MDCAs}

To compute the set of MDCAs one first has to determine for a given digital curve $C$ the circular digital arcs. The basic idea of how to do this is as follows: Starting with an edge $e$ in $C$ one computes a DCA and extends it forward and backward along the edges of the curve $C$ until the DCA becomes maximal. The thusly obtained DCA is the first MDCA in the set of MDCAs of $C$. All further elements are computed by choosing the first edge in $C$ which is not contained in a previous MDCA and computing the maximal DCA from initial edge onwards.

There exist several optimized algorithms to obtain sets of MDCAs. For a detailed discussion we refer to \cite{rouss_three_versions} and the references given therein.

\subsection{MDCA curvature estimator}

Let be given a DCA $A=(e_k)_{k=1,\dots,n}$ in $\partial D_{h}(X)$. We denote by $R^h(A)$ 
the radius of the smallest separating circle, for which the conditions of Definition \ref{D circular separable} holds.
We denote by 
\[
k^h(A):=1/R^h(A),
\] 
the {\em discrete curvature},  and by 
\[
e(A):=e_{\lceil n/2 \rceil}
\]
the {\em central edge} of $A$. Here $\lceil \cdot \rceil$ denotes the ceiling function.

\begin{definition}\label{mdcaest}\cite{rouss} (MDCA curvature estimator)
Let $(A_l)_{l=1,\dots,M}$ be a sequence of MDCAs of a digital curve $C$.
Suppose $E_l$ is a partition of $C$ where $E_{l}$ is the set of edges $e\in C$ such that
$$
d(e,e(A_l))\leq d(e,e(A_k)) \quad \forall k\in\{1,\dots,N\}.
$$

The MDCA curvature estimator $\widehat{\kappa}^h_{\text{MDCA}}(C,e)$ for the digital curve $C$ and an edge $e\in E_l$ in $C$ is defined as the piecewise constant function 
\[
\widehat{\kappa}^h_{\text{MDCA}}(C,e):=k^h(A_l).
\]
\end{definition}

For the algorithm, for a given simple closed digital curve $C=(e_k)_{k=1,\dots N}$ and grid size $h>0$ we first compute the set of MDCAs $(A_l)_{l=1,\dots,M}$ as described in \cite{rouss_three_versions}.
Denote $R_l^{h}$ the radius of the smallest separating circle of $A_l$ expressed in terms of the unit $h$, and let $$M_l:=e_{\lceil (j_l-i_l)/2 \rceil \!\!\!\!\mod N}$$
be the set of middle grid edges.
For all edges $e_{k}$ in $C$, $k=1,\dots,N$, we determine the middle grid edge $e_{i}\in \{ M_{l} \mid l=1, \ldots M\}$ which is closest to $e_{k}$.  
Then we set $$\widehat{\kappa}^h_{MDCA}(C,e_k)=1/R_i^{h}.$$

\subsection{$\lambda$-MDCA curvature estimator}

We shortly would like to remark on another possible curvature estimator, which we will call $\lambda$-MDCA curvature estimator. The idea is to consider not only the closest MDCA for an edge $e_{k}$ of a given curve $C$, but rather all MDCAs in which $e_{k}$ is contained. The weighting of the contributing curvature estimates is chosen as in \cite{lac}, where a similar construction was considered for the estimation of tangents. 

\begin{definition}
Let $C=(e_k)_{k=\{1,\dots,N\}}$ be a digital arc and  let $(A_l)_{l\in\{1,\dots,M\}}$ be the set of MDCAs of $C$. Denote by $e_{i_l}$ and $e_{j_l}$ the start and terminal edges of the MDCA $A_l$. 
For an edge $e_{k}$ lying on $A_{l}$ we define 
$$E_l(k):=\left|\frac {k-i_l}{j_l-i_l}\right|$$
as the eccentricity of $e_{k}$ with respect to $A_{l}$.
\end{definition}

Obviously, $E_l(k)\in[0,1]$. The closer $E_l(k)$ approaches $1/2$, the closer the edge $e_k$ is lying to the middle edge of the MDCA $A_l$. 

The idea is now to weight the contribution of the curvatures of the  various MDCAs with respect to the eccentricity of the considered edge $e_{k}$ of $C$. To this end, we consider a continuous concave function $\lambda:[0,1]\rightarrow\mathbb{R}^{+}_{0}$ which takes its maximum at the point $1/2$ and vanishes at the endpoints of the interval. For example, the functions $\lambda(t):=4t(1-t)$ or $\lambda(t):=-t\log(t)-(1-t)\log(1-t)$ fulfill these requirements.

\begin{definition}\label{lam}($\lambda$-MDCA curvature estimator) 

Let $C=(e_k)_{k=\{1,\dots,N\}}$ be a digital curve, $(A_l)_{l\in\{1,\dots,n\}}$ be the set of MDCAs, and  $\mathcal{P}(e_k):=\{l:e_k\in A_l\}$. Let $k^h(A)$ be defined as in Definition \ref{mdcaest}.

Then
\[
\widehat{\kappa}^h_{\lambda\text{-MDCA}}(C,e_k):=\frac{\sum_{l\in\mathcal{P}(e_k)}\lambda(E_l(k))k^h(A_l)}{\sum_{l\in\mathcal{P}(e_k)}\lambda(E_l(k))}
\]
is the  $\lambda$-MDCA curvature estimator of the digital curve $C$. 
\end{definition} 

In the following section, we prove the convergence of the MDCA curvature estimator, but leave the convergence of the $\lambda$-MDCA curvature estimator as an open problem. 
In Section \ref{ergeb}, where we discuss experimental results, we will use the $\lambda$-MDCA curvature estimator for comparison purposes. According these experimental results we conjecture convergence of this weighted estimator with rate comparable to the classical MDCA curvature estimator.

\section{Multigrid convergence of the MDCA curvature estimator}\label{errest}

In order to obtain error estimates for the MDCA curvature estimator, we first require the concept of multigrid convergence. Multigrid convergence allows the quantification of the error of a discrete geometric estimator when the pixel width gets arbitrarily small, i.e.,  $h\to 0$.

Discrete geometric estimators are used both to determine global quantities, such as the length of a curve or the area of a surface, as well as local quantities, such as the tangent direction or the curvature of a curve. To the best of our knowledge, up to now, the multigrid convergence of a curvature estimator \cite{multi} is an open problem. First estimates are considered in \cite{rouss} where additional assumptions such as restrictions on the growth of the discrete length of the MDCAs involved. Here, we provide a proof for the uniform multigrid convergence of the MDCA curvature estimator without these assumptions.

\subsection{Multigrid convergence}
We define the multigrid convergence for local, discrete, geometric estimators.

Let $\mathbb{X}$ be a family of nonempty, compact, simply connected subsets of $\mathbb{R}^2$ and let $\partial D_h(X)$ be a digital curve. A {\em local, discrete, geometric estimator} $\widehat{E}=(\widehat{E}^h)_{h>0}$ of a local, geometric feature $E(X,x)$ of a set $X$ at the point $x\in\partial X$ is a family of mappings, which assign a real number to a digital curve and an edge on this curve.
\begin{definition}
A local, discrete, geometric estimator $\widehat{E}$ is called {\em multigrid convergent} for the family $\mathbb{X}$, if for all $X\in\mathbb{X}$, $h>0$ and $x\in\partial X$ the following condition holds:
\begin{gather*}
\forall e\in\partial D_h(X),\;\forall y\in e \text{ with }\|x-y\|_1\leq h\\
\exists\, \tau_x(h)\text{ with }\lim_{h\rightarrow 0}\tau_x(h)=0\text{ such that}\\
|\widehat{E}^h(\partial D_h(X),e)-E(X,x)|\leq \tau_x(h).
\end{gather*}
$\widehat{E}$ is called {\em uniform multigrid convergent} if there exists a mapping $\tau(h)$, independent of $x$, with $\tau(h)\geq\tau_x(h)$. $\tau(h)$ is then called the {\em convergence speed}.
\end{definition}
In the following, we take $\widehat{E}$ to be the curvature estimator $\widehat{\kappa}_{\text{MDCA}}$, and in addition require that for any $X\in\mathbb{X}$ the mapping $\partial X\rightarrow \mathbb{R}$, $x\mapsto \kappa(X,x)$, is continuous. 

\subsection{Proof of multigrid convergence of the MDCA curvature estimator}

Firstly, we define the support function of a convex curve which provide a unique characterization of the curve.
\begin{definition}(Support function)\\
\label{D support function}
Let $\,\gamma:[a,b]\rightarrow \mathbb{R}^2$ be a convex Frenet curve with respect to polar coordinates as in Section \ref{diffgeo}, and let
\[
N(\phi):=\left(\begin{array}{c} \cos \phi\\ \sin \phi\end{array}\right)
\]
be its normal.
The {\em support function $f:=f_\gamma^M$ of $\gamma$ with center $M\in \R^2$} is defined by
\begin{gather*}
f_{\gamma}^M(\phi):\;[\phi_1,\phi_2]\rightarrow \mathbb{R},\\
f_{\gamma}^M(\phi):=
\max\left\{\left<\gamma(t)-M,N(\phi)\right>:t\in [a,b]\right\}
\end{gather*}
Here $\phi_1$ and $\phi_2$ is the angle between $\gamma(a)-M$, resp., $\gamma(b)-M$ and the $x$-axis. 
\end{definition}

Note that for predefined $M$ the support function $f_{\gamma}^M(\phi)$  determines the curve $\gamma$ completely \cite{suppfun}.

In the following, we write $f'(\phi) : = \left(f_{\gamma}^M\right)'(\phi):=\frac d {d\phi}f_{\gamma}^M(\phi)$ and 
$f''(\phi) :=\left(f_{\gamma}^M\right)''(\phi):=\frac {d^{2}} {d\phi^{2}}f_{\gamma}^M(\phi)$ for the derivatives of the support function. 

\begin{lemma}\label{traeger}
Let $\gamma:[a,b]\rightarrow \mathbb{R}^2$ be a convex curve as in Definition \ref{D support function}. 
Let $L = (\theta_{2}-\theta_{1})R$ be the central length of the ring segment.
Suppose that $\bar{x} = \gamma(\phi)\in \gr\gamma$. Then,
\[
\frac 1 {\kappa(\bar{x})}=f_{\gamma}^M(\phi)+\left(f_{\gamma}^M\right)^{\prime\prime}(\phi).
\]
\end{lemma}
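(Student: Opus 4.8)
The plan is to derive the classical formula relating the radius of curvature of a convex curve to its support function plus the second derivative of the support function, working entirely in the polar-angle parametrization $\phi$ introduced in Section \ref{diffgeo}. First I would write down the parametrization of $\gamma$ in terms of $f=f_\gamma^M$. For a convex curve, the point $\gamma$ whose outward normal is $N(\phi)=(\cos\phi,\sin\phi)^T$ satisfies $\langle\gamma(t)-M,N(\phi)\rangle = f(\phi)$ and, because this is the supporting point, the tangent direction is orthogonal to $N(\phi)$, i.e.\ parallel to $N'(\phi)=(-\sin\phi,\cos\phi)^T$. Differentiating the identity $\langle\gamma-M,N(\phi)\rangle=f(\phi)$ with respect to $\phi$ and using $\langle \frac{d}{d\phi}\gamma, N(\phi)\rangle=0$ (which holds because $\frac{d}{d\phi}\gamma \perp N(\phi)$) yields $\langle\gamma-M,N'(\phi)\rangle=f'(\phi)$. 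Together these two scalar equations express $\gamma(\phi)-M$ in the orthonormal frame $(N(\phi),N'(\phi))$, giving the standard representation
\[
\gamma(\phi) = M + f(\phi)\,N(\phi) + f'(\phi)\,N'(\phi).
\]

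Next I would differentiate this representation once more with respect to $\phi$, using $N''(\phi)=-N(\phi)$. One obtains $\frac{d}{d\phi}\gamma(\phi) = (f(\phi)+f''(\phi))\,N'(\phi)$, so the speed of the curve in the $\phi$-parametrization is $\Vert \frac{d}{d\phi}\gamma(\phi)\Vert = f(\phi)+f''(\phi)$ (this quantity is positive for a convex curve with nonvanishing curvature, which is exactly the standing assumption). Now I invoke the arc-length/polar-angle relation already established in the excerpt: from \eqref{eq Kruemmung in s} we have $\kappa = \frac{d\phi}{ds}$, equivalently $\frac{ds}{d\phi} = 1/\kappa$, and $\frac{ds}{d\phi}$ is precisely the speed $\Vert \frac{d}{d\phi}\gamma\Vert$ computed above. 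Matching the two expressions gives
\[
\frac{1}{\kappa(\bar x)} = f_\gamma^M(\phi) + \left(f_\gamma^M\right)''(\phi),
\]
where $\bar x=\gamma(\phi)$, which is the claim. I would note in passing that the hypothesis about $L=(\theta_2-\theta_1)R$ being the central length of the ring segment plays no role in this particular lemma — it is carried along for use in later results — so I would not need it here.

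The main obstacle, such as it is, is bookkeeping rather than depth: I must be careful that the polar-angle convention used to define the support function (Definition \ref{D support function}) agrees with the tangent/normal convention of Section \ref{diffgeo}, so that the outward normal of the convex curve really is $N(\phi)=(\cos\phi,\sin\phi)^T$ and the angle $\phi$ increases consistently with arc length — otherwise a sign could flip. I would verify that $\langle\frac{d}{d\phi}\gamma,N(\phi)\rangle=0$ by differentiating the defining maximum relation and using that the maximizing $t$ is an interior critical point (or, equivalently, that at the supporting point the envelope condition holds), which is where the convexity of $\gamma$ is genuinely used. Once the frame is fixed correctly, the remaining computation is the two-step differentiation above together with the already-proven identity $\kappa=d\phi/ds$, and the result follows.
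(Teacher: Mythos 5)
Your proposal is correct, and it is worth noting that the paper itself does not prove this lemma at all: it simply refers the reader to the literature (the citation to \cite[p. 583]{suppfun}), so any self-contained argument is necessarily ``different'' from the paper's treatment. What you give is the classical derivation: the first-order condition at the supporting point kills the $\langle\dot\gamma\,t'(\phi),N(\phi)\rangle$ term when differentiating the defining maximum, yielding $\gamma(\phi)=M+f(\phi)N(\phi)+f'(\phi)N'(\phi)$; a second differentiation with $N''=-N$ gives $\tfrac{d\gamma}{d\phi}=(f+f'')N'(\phi)$, and matching the speed $\Vert \tfrac{d\gamma}{d\phi}\Vert=\tfrac{ds}{d\phi}$ with the identity $\kappa=\tfrac{d\phi}{ds}$ from \eqref{eq Kruemmung in s} gives $1/\kappa=f+f''$. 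This is sound, and you correctly flag the two points that need care: the orientation convention (the paper's frame in Section \ref{diffgeo} has $N'(\phi)=-\gamma'(s)$ up to the choice of traversal direction, so positivity of $f+f''$ for a convex curve with $\kappa>0$ is what fixes the sign) and the fact that the maximizer must be an interior critical point so the envelope argument applies — for $\bar x=\gamma(\phi)$ with $\phi$ strictly between $\phi_1$ and $\phi_2$ this is exactly the convexity hypothesis at work. Your observation that the hypothesis $L=(\theta_2-\theta_1)R$ plays no role in the statement is also accurate; it is carried along only for the subsequent lemmas about the ring segment. In short, your argument supplies an explicit proof where the paper only supplies a pointer, and it buys self-containedness at essentially no extra cost.
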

For the proof, we refer to \cite[p. 583]{suppfun}.

\begin{definition}
For $M\in\mathbb{R}^2$, $R, d\in\mathbb{R}^{+}$, and $\theta_1$,$\theta_2\in[0,2\pi]$, we define a
{\em $(M, R, d, \theta_1, \theta_2)$-ring segment} by 
\begin{align*}
\mathcal{R} &:= \mathcal{R}(M, R, d, \theta_1, \theta_2)\\
&:= \left\{M+(r\cos\theta,r\sin\theta)^{T}|R-d\leq r\leq R+d,\theta\in [\theta_1,\theta_2]\right\}.
\end{align*}
If for all $\theta\in[\theta_1,\theta_2] $, the intersection
$$\{M+r(\cos\theta,\sin\theta)^{T}|r>0\}\cap \gr \gamma $$ 
with a convex curve $\gamma$ consists exactly of one point $x(\theta)\in\mathcal{R}$, then one says that the ring segment \emph{simply covers} $\gamma$.
\end{definition}

Next, we extend Lemma 1 in \cite{rouss} in the sense that we explicitly give the order of approximation and provide easily verifiable conditions on the convex curve $\gamma$.
To this end, let $\mathcal{R} = \mathcal{R}(M, R, d, \theta_1, \theta_2)$ be a ring segment simply covering a convex curve $\gamma$. We consider a new parameterization of $\gamma$ in terms of $\theta\in [\theta_1,\theta_2]$:
$$
x(\theta):=\{M+r(\cos\theta,\sin\theta)^{T}|r>0\}\cap \gr \gamma.
$$ 
For $i=1,2$ the point $x(\theta_i) = \gamma(\phi_{i})$ has normal vector $N(\phi_i)$ and $\gr \gamma\cap\mathcal{R}$ is uniquely characterized by the support function $f=f_{\gamma}^M: [\phi_1,\phi_2]\rightarrow \mathbb{R}$. (See Figure \ref{ringgraph}.)

\begin{figure}[h]
\begin{center}
\includegraphics[width=0.49\textwidth]{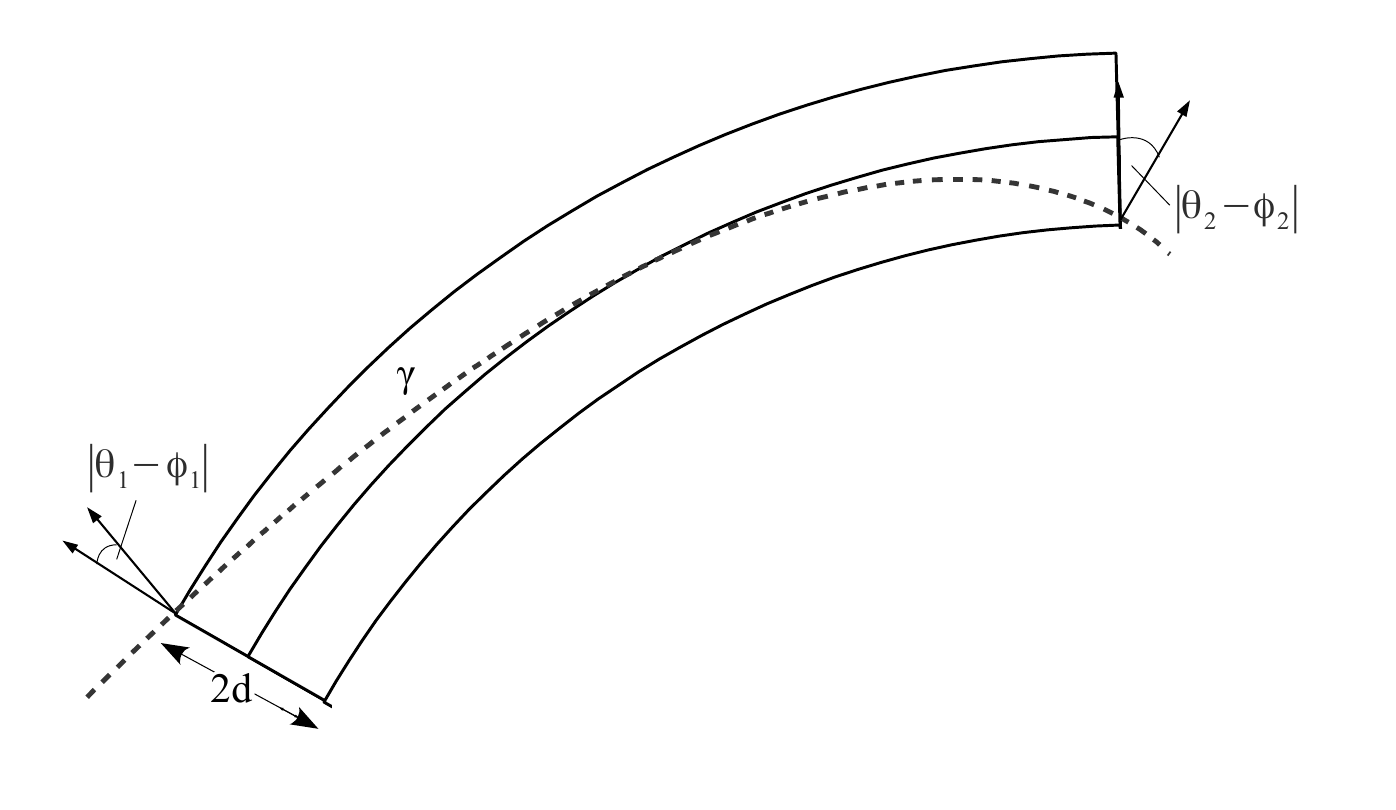}
\caption{Simply covering ring segment for the curve $\gamma$. Note the difference of the angles $|\theta_{1}-\phi_{1}|$ and $|\theta_{2}-\phi_{2}|$ of the segment and the normal to the curve $\gamma$.}\label{ringgraph}
\end{center}
\end{figure}

\noindent
In order to state and prove the main theorem, we require some lemmas.

\begin{lemma}\label{lem12}
Let $\gamma$ be a convex {Frenet}  curve and let $\cR$ be a ring segment simply covering $\gamma$. Suppose that the curvature $\kappa$ of $\gr\gamma\cap\cR$ is bounded  below by $\kappa_{\min} > 0$ and above by $\kappa_{\max}$. Then the angular difference $\Delta\phi := \phi_2-\phi_1$ is bounded as follows:
\be\label{phi}
\kappa_{\min}L(R-d)/R\leq \Delta\phi\leq \kappa_{\max}(L(R+d)/R+4d).
\ee
\end{lemma}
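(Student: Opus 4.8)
The plan is to reduce the asserted two-sided bound to a matching estimate for the arc length $\Delta s$ of $\gr\gamma\cap\cR$, and then to estimate $\Delta s$ by reparametrizing the covered arc in the polar angle $\theta$. Along $\gr\gamma\cap\cR$ one has $d\phi=\kappa\,ds$ by \eqref{eq Kruemmung in s} (equivalently $\Delta s=\int_{\phi_1}^{\phi_2}\kappa^{-1}\,d\phi$ by \eqref{eq trmcurv}), so integrating and using $\kappa_{\min}\le\kappa\le\kappa_{\max}$ gives
\[
\kappa_{\min}\,\Delta s\;\le\;\Delta\phi\;\le\;\kappa_{\max}\,\Delta s .
\]
Hence it suffices to prove $(R-d)L/R\le\Delta s\le(R+d)L/R+4d$, where $L/R=\theta_2-\theta_1$ by the definition $L=(\theta_2-\theta_1)R$ of the central length.

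Since $\cR$ simply covers $\gamma$, the covered arc is $x(\theta)=M+r(\theta)(\cos\theta,\sin\theta)^{T}$ for $\theta\in[\theta_1,\theta_2]$ with $R-d\le r(\theta)\le R+d$, and its length is $\Delta s=\int_{\theta_1}^{\theta_2}\sqrt{r(\theta)^2+r'(\theta)^2}\,d\theta$. The lower bound is immediate: $\sqrt{r^2+r'^2}\ge r\ge R-d$, so $\Delta s\ge(R-d)(\theta_2-\theta_1)=(R-d)L/R$, and then $\Delta\phi\ge\kappa_{\min}(R-d)L/R$. For the upper bound I would use $\sqrt{r^2+r'^2}\le r+|r'|\le(R+d)+|r'(\theta)|$, so that
\[
\Delta s\;\le\;(R+d)(\theta_2-\theta_1)+\int_{\theta_1}^{\theta_2}|r'(\theta)|\,d\theta\;=\;(R+d)\,\frac{L}{R}+V ,
\]
where $V$ is the total variation of $r$ over $[\theta_1,\theta_2]$; it then remains to prove $V\le 4d$.

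To bound $V$, note first that $r$ takes values in the interval $[R-d,R+d]$ of length $2d$, so the variation of $r$ between two consecutive local extrema is at most $2d$; hence it is enough that $r$ has at most one interior local extremum on $[\theta_1,\theta_2]$, which makes $r$ monotone on at most two subintervals and gives $V\le 2d+2d=4d$. To establish this unimodality I would pass to the support function $f=f_\gamma^M$: from $x(\phi)-M=f(\phi)N(\phi)+f'(\phi)N'(\phi)$ one has $r^2=f^2+f'^2$, hence $\tfrac{d}{d\phi}\,r^2=2f'(\phi)\bigl(f(\phi)+f''(\phi)\bigr)=2f'(\phi)/\kappa(\phi)$ by Lemma \ref{traeger}; since $\kappa>0$, the interior local extrema of $r$ are exactly the sign changes of $f'$, and convexity of $\gamma$ together with the simple-covering property is used to show that $f'$ changes sign at most once along the covered arc. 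Combining $V\le 4d$ with $\Delta\phi\le\kappa_{\max}\Delta s$ then yields the right inequality.

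The step I expect to be the crux is precisely this last one: deducing, from convexity of $\gamma$ and the fact that each radial ray meets $\gamma$ exactly once inside $\cR$, that the outward normal is radial (i.e., $f'(\phi)=0$) at most once along the covered arc, so that the constant is genuinely $4d$ rather than a larger multiple of $d$ reflecting how often $\gamma$ turns radially within the ring. Everything else — the two applications of the curvature bounds, the polar reparametrization, and the elementary inequalities $r\le\sqrt{r^2+r'^2}\le r+|r'|$ — is routine.
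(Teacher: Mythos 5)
Your opening reduction coincides with the paper's own argument: via \eqref{eq trmcurv} one has $L_\gamma=\int_{\phi_1}^{\phi_2}\kappa^{-1}\,d\phi$, hence $\kappa_{\min}L_\gamma\leq\Delta\phi\leq\kappa_{\max}L_\gamma$, and your lower bound $(R-d)(\theta_2-\theta_1)\leq L_\gamma$ is exactly the paper's. The difference is that the paper obtains the companion estimate $L_\gamma\leq(R+d)(\theta_2-\theta_1)+4d$ directly as a consequence of convexity, whereas you try to manufacture the $4d$ as the total radial variation $V$ of $r(\theta)$, reduced to the claim that $f'$ changes sign at most once (equivalently, $r$ has at most one interior extremum) on the covered arc. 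That claim — which you correctly identify as the crux and leave unproved — is false under the lemma's hypotheses. Take $\gamma$ to be three quarters of an ellipse with semi-axes $a>b$, $M$ its center, $R=(a+b)/2$, $d=(a-b)/2$, $\theta_1=0$, $\theta_2=3\pi/2$. This is a convex Frenet curve with $0<\kappa_{\min}\leq\kappa\leq\kappa_{\max}$, simply covered by $\cR(M,R,d,0,3\pi/2)$, yet $r(\theta)$ has interior extrema at $\theta=\pi/2$ and $\theta=\pi$, so $f'$ changes sign twice and $V=3(a-b)=6d>4d$; strictly convex curves close to a square push $V$ to roughly $8d$ already over $\Delta\theta=\pi$. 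So the chain $L_\gamma\leq(R+d)\Delta\theta+V$, $V\leq 4d$ cannot be completed: in precisely those examples the stated bound survives only because $\int r\,d\theta$ falls well short of $(R+d)\Delta\theta$, a compensation that your splitting into a tangential term plus radial variation throws away.

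Consequently, what you have proved is the left inequality of \eqref{phi} and the reduction of both inequalities to the arc-length estimate; the right inequality still needs the bound $L_\gamma\leq(R+d)(\theta_2-\theta_1)+4d$, and this must come from a genuinely convexity-based length comparison rather than from unimodality of $r$ — for instance, by comparing the convex arc (together with its chord) with the boundary of an enclosing convex region built from the outer circular arc and the two radial sides of $\cR$, using monotonicity of perimeter under inclusion of convex sets. That is the content the paper compresses into the phrase ``the convexity of $\gamma$ implies,'' and it is the part your proposal does not deliver.
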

\begin{proof}
Let $L_{\gamma} = L_{\gamma}(x(\theta_{1}),x(\theta_{2}))$ be the length of $\gamma$ from $x(\theta_1)$ to $x(\theta_2)$. The convexity of $\gamma$ implies that
$$
(R-d)(\theta_2-\theta_1) \leq L_\gamma\leq (R+d)(\theta_2-\theta_1)+4d,
$$
which is equivalent to
$$
L(R-d)/R \leq L_\gamma\leq L(R+d)/R+4d.
$$
By Eqn. \eqref{eq trmcurv} and the convexity of $\gamma$ (i.e. $\kappa > 0$)
we have that 
$$
L_\gamma=\int_{\phi_1}^{\phi_2}\frac {1}{ \kappa(\gamma(\phi))} d\phi.
$$
As the curvature $\kappa$ is bounded above and below we obtain
$$
\kappa_{\min}L(R-d)/R\leq \Delta\phi\leq \kappa_{\max}(L(R+d)/R+4d). \qedhere
$$
\end{proof}
The next lemma gives bound on the support function $f(\phi)$.

\begin{lemma}\label{lem13}
Let $\gamma$ be a convex {Frenet} curve and let $\cR$ be a ring segment simply covering $\gamma$. Suppose that the curvature $\kappa$ is bounded above by $\kappa_{\max} > 0$. Then the support function $f(\phi)$ of $\gamma$ satisfies the estimate
\begin{align}\label{cos2}
(R-d)\big(1-2d\kappa_{\max}\frac{R}{R+d}\big)\leq f(\phi)\leq R+d.
\end{align}
\end{lemma}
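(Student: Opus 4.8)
The plan is to reduce the estimate to controlling a single geometric quantity — the angle $\psi(\phi)$ between the ray from $M$ through $\gamma(\phi)$ and the normal $N(\phi)$ — and then to bound $\psi$ using the confinement of the arc to the width-$2d$ annulus together with the curvature ceiling and Lemma~\ref{traeger}. First I would record the support-function identity: with $N(\phi)=(\cos\phi,\sin\phi)^T$ and $T(\phi)=N'(\phi)=(-\sin\phi,\cos\phi)^T$, differentiating $f(\phi)=\langle\gamma(\phi)-M,N(\phi)\rangle$ and using $\langle\gamma'(\phi),N(\phi)\rangle=0$ together with $d\phi/ds=\kappa$ (Eqn.~\eqref{eq Kruemmung in s}) gives $f'(\phi)=\langle\gamma(\phi)-M,T(\phi)\rangle$, hence $\gamma(\phi)-M=f(\phi)N(\phi)+f'(\phi)T(\phi)$ (and, differentiating once more, the relation $f''+f=1/\kappa$ of Lemma~\ref{traeger}).

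The upper bound is then immediate. Since $\cR$ simply covers $\gamma$, the point $\gamma(\phi)$ carrying normal direction $\phi\in[\phi_1,\phi_2]$ is the unique intersection $x(\theta(\phi))$ of $\gr\gamma$ with a ray of angle $\theta(\phi)\in[\theta_1,\theta_2]$, so $\|\gamma(\phi)-M\|\le R+d$, and Cauchy--Schwarz with $\|N(\phi)\|=1$ gives $f(\phi)=\langle\gamma(\phi)-M,N(\phi)\rangle\le R+d$.

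For the lower bound I would pass to polar coordinates about $M$: writing $\gamma(\phi)-M=r(\theta)(\cos\theta,\sin\theta)^T$ with $r(\theta)\in[R-d,R+d]$ and $\theta=\theta(\phi)$, and setting $\psi(\phi):=\theta(\phi)-\phi$, the identity above projects to $f(\phi)=r(\theta)\cos\psi(\phi)$ and $f'(\phi)=r(\theta)\sin\psi(\phi)$. Hence $f(\phi)\ge(R-d)\cos\psi(\phi)$, and it suffices to show $\cos\psi(\phi)\ge 1-2d\kappa_{\max}\tfrac{R}{R+d}$. I would argue by locating a minimizer $\phi_0$ of $f$ on $[\phi_1,\phi_2]$: if $\phi_0$ is interior then $f'(\phi_0)=0$, so $\psi(\phi_0)=0$ and $f(\phi_0)=r(\theta(\phi_0))\ge R-d$, which already exceeds the claimed bound; so the remaining case is a minimizer at an endpoint, say $\phi_1$, with $f(\phi_1)<R-d$. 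On a maximal interval $[\phi_1,\phi_1+\delta]$ on which $f$ increases from $f(\phi_1)$ to $R-d$ one has $f'>0$, hence $\sin\psi>0$ has fixed sign; since $dr/ds=\sin\psi$ the radial coordinate increases along this sub-arc while staying in the annulus, so it increases by at most $2d$. Using $dr/ds=\sin\psi$ and $df/ds=f'\kappa=r\kappa\sin\psi$, so that $\sin\psi\,ds=df/(r\kappa)$, integration over the sub-arc gives
\[
2d\ \ge\ \int_{f(\phi_1)}^{R-d}\frac{df}{r\,\kappa}\ \ge\ \frac{(R-d)-f(\phi_1)}{(R+d)\,\kappa_{\max}},
\]
i.e.\ $f(\phi_1)\ge(R-d)-\mathcal O\!\left(d\kappa_{\max}R\right)$, a bound of the asserted form. (The degenerate case in which $f<R-d$ on all of $[\phi_1,\phi_2]$ is handled the same way, running the computation from a point at which $r$ is extremal, so that $\psi=0$ and $f\ge R-d$ there.) A more careful accounting of the $R\pm d$ factors in these estimates refines the constant to the stated $2d\kappa_{\max}\tfrac{R}{R+d}$, and the same argument at $\phi_2$ then yields $f(\phi)\ge(R-d)\bigl(1-2d\kappa_{\max}\tfrac{R}{R+d}\bigr)$ for all $\phi\in[\phi_1,\phi_2]$.

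The step I expect to be the main obstacle is exactly this endpoint estimate of $\psi$ (equivalently of $f'$). At an interior critical point of $f'$ one has $f''=0$, so $f=1/\kappa$ by Lemma~\ref{traeger} and $f'^2=r^2-1/\kappa^2\le(R+d)^2-1/\kappa_{\max}^2$ is harmless; but at the ends of the covered arc one must genuinely exploit that the covering is \emph{simple} — that the width-$2d$ annular confinement together with the curvature ceiling prevents the normal from lagging far behind the radial direction — and keeping precise track of the $R\pm d$ factors so that the constant closes as $2d\kappa_{\max}\tfrac{R}{R+d}$ rather than a cruder $\mathcal O(d\kappa_{\max})$ is where the real care lies. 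A Sturm-type comparison of $f''+f=1/\kappa\ge 1/\kappa_{\max}$ with the constant-coefficient equation $u''+u=1/\kappa_{\max}$ provides an alternative route to the same endpoint control.
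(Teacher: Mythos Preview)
Your reduction is exactly the paper's: write $f(\phi)=\|x-M\|\cos|\phi-\theta|$, so that $(R-d)\cos|\phi-\theta|\le f(\phi)\le R+d$, and then the whole content of the lemma is a lower bound on $\cos|\phi-\theta|$. Where you diverge is in how that bound is obtained. You try to control the endpoint value of $\psi=\theta-\phi$ by integrating $dr=df/(r\kappa)$ along a sub-arc; as you yourself note, this only yields $f(\phi_1)\ge (R-d)-2d(R+d)\kappa_{\max}$, which is strictly weaker than the claimed $(R-d)\bigl(1-2d\kappa_{\max}R/(R+d)\bigr)=(R-d)-2d\kappa_{\max}R(R-d)/(R+d)$, and no amount of bookkeeping on the $R\pm d$ factors in that integral inequality will recover the sharper constant, because the extremal configuration is not captured by crude sup/inf bounds on $r$ and $\kappa$ separately.

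The paper's argument is instead a direct geometric comparison: the angle $|\phi-\theta|$ is maximised when $\gamma$ is a circular arc of curvature $\kappa_{\max}$ inscribed in the annulus, tangent internally to the outer boundary $r=R+d$ and meeting the inner boundary $r=R-d$. For that explicit configuration, elementary plane geometry (similar triangles plus the Pythagorean relation between the two circle centres and the point of intersection with $r=R-d$) gives $\cos|\phi-\theta|_{\max}=1-2Rd\kappa_{\max}/(R+d)$ on the nose. Your Sturm-comparison suggestion, comparing $f''+f=1/\kappa\ge 1/\kappa_{\max}$ with the support function of a circle of curvature $\kappa_{\max}$, is the analytic shadow of exactly this idea and would close the constant if carried through; the integration route as you wrote it does not.
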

\begin{proof}
For each $x(\theta)\in \gr\gamma\cap\mathcal{R}$ with normal $N(\phi)$, we have that $f(\phi) = \|x-M\|\cos |\phi-\theta| $. Consequently,
\begin{align}\label{cos1}
(R-d)\cos(|\phi-\theta|)\leq f(\phi)\leq (R+d).
\end{align}
The maximum angle $|\phi-\theta|_{\max}$is obtained if the curve $\gamma$ maintains constant maximum curvature $\kappa_{\max}$ and remains in $\mathcal{R}$; see Figure \ref{circle}.

\begin{figure}[h]
\begin{center}
\includegraphics[width=0.45\textwidth]{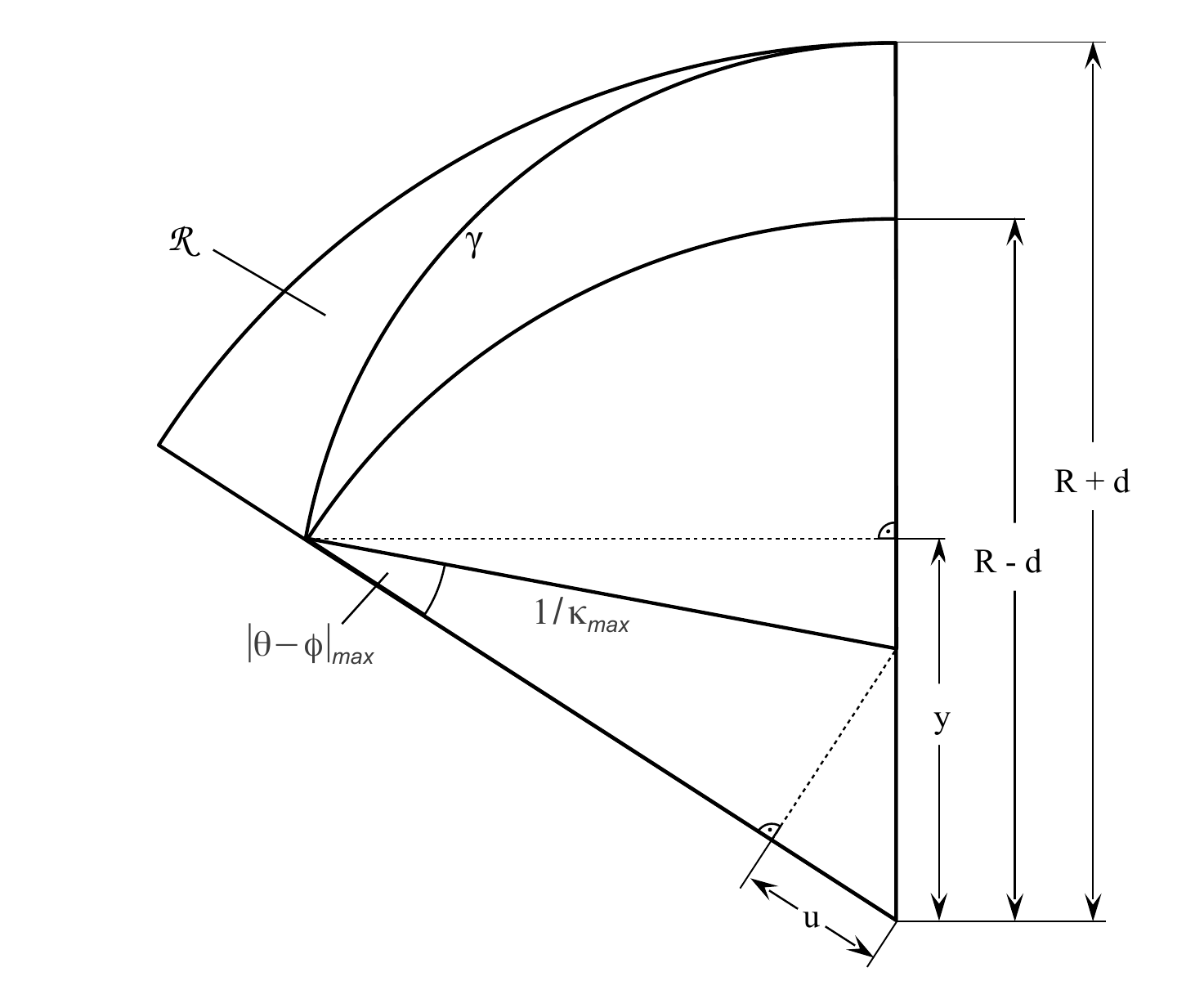}
\caption{The curve $\gamma$ in the ring segment $\mathcal{R}$ and the properties of inner and outer radii.}\label{circle}
\end{center}
\end{figure}

Thus,
\[
\cos(|\phi-\theta|_{\max})=(R-d-u)\kappa_{\max}
\]
and 
\[
\frac u {R+d-1/\kappa_{\max}}=\frac y {R-d}.
\]
Together with 
\[
(R-d)^2-y^2=1/\kappa_{\max}^2-(y-(R+d-1/\kappa_{\max}))^2,
\]
this yields
\begin{align*}
\cos(|\phi-\theta|_{\max})=1-2Rd\kappa_{\max}/(R+d).
\end{align*}
Substitution into (\ref{cos1}) produces
$$
(R-d)(1-2Rd\kappa_{\max}/(R+d))\leq f(\phi)\leq R+d. \qedhere
$$
\end{proof}

We also need an estimate about the asymptotic behavior with respect to the  length $L_\gamma$ and the thickness $d$. This estimate is provided by

\begin{lemma}\label{lem14}
Assume that $\gamma$ is an at least four times differentiable convex curve with ring segment $\cR$ which simply covers $\gamma$. {Let $ \gamma$ be parametrized by arc length $s$ in polar coordinates:
$$\gamma(s)=(r(s)\cos \theta(s) ,r(s) \sin \theta(s) )^{T},$$
and let $\gamma(0)\in\mathcal{R}$, for all $d>0$.} Suppose that the circle $\mathcal{K}(M,R)$ osculates the curve $\gamma$ at a point $x=\gamma(0)$, i.e., $r(0)=R$, $r'(0)=0$, and $\kappa(0)=1/R$. Further suppose that there exists an $\ell\in \N$ such that
\begin{align}
\kappa^{(k)}(0) &= 0, \quad\text{for $k = 1, \ldots, \ell - 3$,}\label{kabsch}\\
\kappa^{(\ell-2)}(0) &\neq 0, \quad\text{for $\ell \geq 3$.}
\end{align}
Then
\be\label{dL}
d \in \cO(L_\gamma^\ell), \quad\text{for $L_\gamma\to 0+$}.
\ee
\end{lemma}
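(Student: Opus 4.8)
The plan is to reduce the thickness $d$ to the pointwise deviation of the radial function of $\gamma$ from $R$, and then to show that this deviation vanishes to order $\ell$ at the osculation point $s=0$. Write $\beta(s):=\gamma(s)-M$ and $\rho(s):=\|\beta(s)\|$. Since $\cR=\cR(M,R,d,\theta_1,\theta_2)$ simply covers $\gamma$, every point of $\gr\gamma\cap\cR$ has radial coordinate in $[R-d,R+d]$, so the smallest admissible thickness equals $\max\{\,|\rho(s)-R|:\gamma(s)\in\gr\gamma\cap\cR\,\}$. Because $\gamma(0)\in\cR$ for every $d>0$, the osculation point lies on this arc, whose total length is $\le L_\gamma$; hence every $\gamma(s)$ on it satisfies $|s|\le L_\gamma$ (arc length measured from $s=0$), and it suffices to bound $\max_{|s|\le L_\gamma}|\rho(s)-R|$.

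For the Taylor coefficients of $\rho$ at $0$, I would introduce $v:=\langle\beta,\gamma'\rangle$, $w:=\langle\beta,N\rangle$, and the osculation defect $\psi:=1+\kappa w$. The planar Frenet relations $\gamma''=\kappa N$ and $N'=-\kappa\gamma'$ give the closed system $v'=1+\kappa w=\psi$, $w'=-\kappa v$; moreover, with $g:=\rho^2$, one has $g'=2v$ and $g''=2\psi$. The osculation hypotheses $r(0)=R$, $r'(0)=0$, $\kappa(0)=1/R$ (recalling that the osculating circle has center $M=\gamma(0)+\kappa(0)^{-1}N(0)$) translate into $g(0)=R^2$, $v(0)=0$, $w(0)=-R$, hence $\psi(0)=0$. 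Expanding $\psi=1+\kappa w$, $w'=-\kappa v$ and $v'=\psi$ by the Leibniz rule at $s=0$ and using $\kappa^{(k)}(0)=0$ for $1\le k\le\ell-3$, a short simultaneous induction on the differentiation order yields $\psi^{(j)}(0)=0$ for $0\le j\le\ell-3$, $v^{(j)}(0)=0$ for $0\le j\le\ell-2$, and $w^{(j)}(0)=0$ for $1\le j\le\ell-2$; at order $\ell-2$ only a single Leibniz term survives, so $\psi^{(\ell-2)}(0)=\kappa^{(\ell-2)}(0)\,w(0)=-R\,\kappa^{(\ell-2)}(0)\ne0$. Via $g^{(j)}=2\psi^{(j-2)}$ (and $g'=2v$) this gives $g^{(j)}(0)=0$ for $1\le j\le\ell-1$ and $g^{(\ell)}(0)\ne0$; since $g=\rho^2$ with $g(0)=R^2>0$, the same vanishing pattern transfers to $\rho$, i.e. $\rho^{(j)}(0)=0$ for $1\le j\le\ell-1$ and $\rho^{(\ell)}(0)=g^{(\ell)}(0)/(2R)\ne0$.

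Finally, since $\rho\in C^\ell$ near $0$ and $\rho-R$ vanishes together with its first $\ell-1$ derivatives at $0$, Taylor's theorem with remainder provides $C_0,\delta_0>0$ with $|\rho(s)-R|\le C_0|s|^\ell$ for $|s|\le\delta_0$; for $L_\gamma\le\delta_0$ this gives $d\le\max_{|s|\le L_\gamma}|\rho(s)-R|\le C_0L_\gamma^\ell$, i.e. $d\in\cO(L_\gamma^\ell)$ as $L_\gamma\to0+$. I expect the main obstacle to be the bookkeeping of the induction: one must verify that at every order the Leibniz expansions of $\kappa w$ and $\kappa v$ at $s=0$ annihilate all terms except the single intended one at order $\ell-2$, while keeping the three vanishing statements (for $\psi$, $w$, $v$) mutually coupled. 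Two smaller points need care: in the reduction one reads $d$ as the thickness measured relative to the radius $R$, and the differentiability hypothesis must be understood as $\gamma\in C^\ell$ (the stated ``at least four times'' covers the generic cases $\ell\in\{3,4\}$).
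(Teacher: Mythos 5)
Your proof is correct, and it reaches the same conclusion by the same overall strategy as the paper --- namely, show that the radial deviation from the osculating circle vanishes at $s=0$ together with its derivatives up to order $\ell-1$, with leading Taylor coefficient proportional to $\kappa^{(\ell-2)}(0)$, then use $|s|\leq L_\gamma$ --- but the computational route is genuinely different. The paper works in polar coordinates centered at $M$: it derives $\kappa = r\theta'^2 - r''$, runs an induction on the angular derivatives $\theta^{(n)}(0)$ (via $\phi^{(n)}=\kappa^{(n-1)}$ and \eqref{kabsch}), and then a second induction on $d^{(n)}(0)$ for $d(s)=R-r(s)$, arriving at $d(s)=\frac{\kappa^{(\ell-2)}(0)}{\ell!}s^\ell+o(s^\ell)$. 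You instead expand the squared distance $g=\|\gamma-M\|^2$ through the Frenet scalars $v=\langle\beta,\gamma'\rangle$, $w=\langle\beta,N\rangle$, $\psi=1+\kappa w$, whose closed system $v'=\psi$, $w'=-\kappa v$, $g''=2\psi$ makes the Leibniz/induction bookkeeping cleaner and avoids the rather involved polar-coordinate formulas; your values are consistent with the paper's, since $\rho^{(\ell)}(0)=g^{(\ell)}(0)/(2R)=-\kappa^{(\ell-2)}(0)$ reproduces $d^{(\ell)}(0)=\kappa^{(\ell-2)}(0)$. Your two caveats are also the paper's implicit reading: $d$ is interpreted as the (minimal) covering thickness, i.e.\ the maximal radial deviation over the arc, and the smoothness hypothesis must really be $\gamma\in C^\ell$ (equivalently $\kappa\in C^{\ell-2}$) for general $\ell$. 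The only thing your route does not deliver is a by-product the paper exploits later: the explicit polar relations $\kappa=\theta'^2 r-r''$ and $d(s)=R-r(s)$ from this proof are reused verbatim in the proof of Lemma \ref{ring2}, so if you adopted your formulation throughout you would need to redo that step in your own variables (which is straightforward, since $r=\rho$ and $r'=\langle\beta,\gamma'\rangle/\rho$).
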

\begin{proof}
{Let $\gamma$ be expressed in polar coordinates with arc length $s\in [-L_{1},L_{2}]$, where $L_{\gamma} = L_{1}+L_{2}$, compare with Figure \ref{l1l2graph}:}
\begin{align*}
\gamma(s)&=(r\cos \theta ,r \sin \theta )^{T},\\
\gamma'(s)&=(r'\cos \theta-r\theta'\sin\theta,r'\sin \theta+r\theta'\cos\theta)^{T},\\
\gamma''(s)&=\left(\begin{array}{c}r''-r\theta'^2\\2r'\theta'+r\theta''\end{array}\right)\cos\theta+
\left(\begin{array}{c}-2r'\theta'-r\theta''\\r''-r\theta'^2\end{array}\right)\sin\theta,
\end{align*}
which implies
\begin{align}\label{kap}
\kappa &=\inn{\gamma'' }{N(\theta)}=r \theta'^2-r''.
\end{align}
{Here, $\gamma, r, \theta$ and all their derivatives are functions in $s$.}

\begin{figure}[h]
\begin{center}
\includegraphics[width=0.45\textwidth]{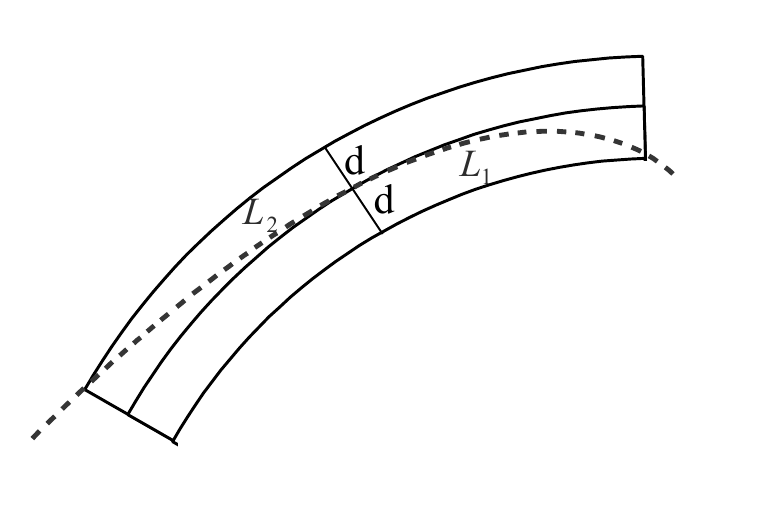}
\caption{{The distance $d$, and the segments $L_{1}$ and $L_{2}$ of the curve $\gamma$ (dashed) in the ring segment $\mathcal{R}$.}\label{l1l2graph}}
\end{center}
\end{figure}

Let $d(s) := R - r(s)$, {$-L_{1}\leq s \leq L_2$}. In order to use (\ref{kap}), we first need the derivatives of $\theta(s)$ at $s=0$. As $r'(0)=0$, {we have} $\theta(0)=\phi(0)$. Now,
\begin{align} 
\theta'(s)&=\frac{\text{d}\theta}{\text{d}s}(s)=\left(\frac{\text{d}\theta}{\text{d}s_\theta}\frac{\text{d}s_\theta}{\text{d}s}\right)(s)\nonumber\\
& =\frac1R\cdot\cos(\phi(s)-\theta(s))\quad \Rightarrow \quad \theta'(0)=\frac 1 R, \label{thet1}\\
\label{thet2}
\theta''(s)&=-\frac1R\cdot\sin(\phi(s)-\theta(s))(\phi'(s)-\theta'(s)),
\end{align}
implying $\theta''(0)=0$.

By induction on $n$ for $n\geq 2$ with initial induction step (\ref{thet2}), one can show that
\begin{align*}
R\cdot\theta^{(n)}(s)&=\cos(\phi(s)-\theta(s))\;\times\\
& \sum_{2\leq m\leq n-1\atop m \text{ even}}\;\prod_{1\leq i_k\leq n-2\atop\sum_{k=1}^m i_k=n-1}c_{i_k}(\phi^{(i_k)}(s)-\theta^{(i_k)}(s))\\
&+\sin(\phi(s)-\theta(s))\;\times\\
& \sum_{1\leq m\leq n-1\atop m \text{ odd }}\;\prod_{1\leq i_k\leq n-1\atop\sum_{k=1}^m i_k=n-1}d_{i_k}(\phi^{(i_k)}(s)-\theta^{(i_k)}(s)).
\end{align*}
{Condition (\ref{eq Kruemmung in s}) gives $\phi^{(\ell)}(s)=\kappa^{(\ell-1)}(s)$, and 
together with (\ref{kabsch}) we get}
\begin{align}\label{thet3}
&\theta^{(n)}(0)=0, \quad\text{ for }2\leq n\leq \ell-1.
\end{align}
Now we can calculate the derivatives of $d(s)$ at $s=0$:
\begin{align*}
d(0)&=R-R=0,\\
d'(0)&=-r'(0)=0,\\
d''(0)&=-r''(0)=\kappa(0)-\theta'^2(0)r(0)=\frac 1R-\frac{1}{R^2}R=0,
\end{align*} 
where the second-to-last equality holds because of (\ref{thet1}). The higher-order derivatives of $d$ are obtained again by induction on $n$ for $n\geq 2$ but with initial induction step $d''(s)=\kappa(s)-\theta'^2(s)r(s)$:
{
\begin{align*}
d^{(n)}(s) &=\kappa^{(n-2)}(s)\\
& -\sum_{\substack{1\leq i,j\leq n-1\\1\leq k\leq n-2\\i+j+k=n}}c_{ijk}\theta^{(i)}(s)\theta^{(j)}(s)r^{(k)}(s), \text{ for } c_{ijk}\in\mathbb{R}.
\end{align*}
}

Using \eqref{kabsch} and \eqref{thet3}, we thus obtain
\begin{align}\label{abd1}
&d^{(n)}(0)=0,\quad\text{ for }0\leq n\leq \ell-1,\\\label{abd2}
&d^{(\ell)}(0)=\kappa^{(\ell-2)}(0)>0.
\end{align}
Consequently, the Taylor expansion of $d(s)$ at $s=0$ is given by
{
\be\label{taylord}
d(s) = \frac{\kappa^{(\ell-2)}(0)}{\ell!}\, s^\ell+{o}(s^{\ell}),\mbox{ for } s \to 0.
\ee
}
In particular, since $|s| \leq \max\{L_{1},L_{2}\}\leq L_{\gamma}$, we have for $L_{\gamma}$ small enough
\[
d \in \cO (L_\gamma^\ell), \text{ for $\ell\geq 3$, as $L_\gamma \to 0+$}.
\]
\end{proof}

\begin{lemma}
Suppose the hypotheses of Lemmas \ref{lem12},\ref{lem13}, and \ref{lem14} are satisfied. Let $\bar{\phi} : = (\phi_{1}+\phi_{2})/2$. 

Then the second derivative of the support function $f$ obeys the following asymptotic estimate
\be\label{abl}
|f''(\bar{\phi})| \in\mathcal{O}\left(\frac d {L_\gamma^2}\right), \quad\text{as $L_\gamma$, $d\to 0+$}.
\ee
\end{lemma}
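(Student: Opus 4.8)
The plan is to express $f''(\bar\phi)$ in terms of geometric quantities that the previous lemmas already control, namely the radius $R$, the thickness $d$, the central length $L$, and the arc length $L_\gamma$. First I would recall from Lemma~\ref{traeger} that $1/\kappa(x(\bar\phi)) = f(\bar\phi) + f''(\bar\phi)$, so that $f''(\bar\phi) = 1/\kappa(x(\bar\phi)) - f(\bar\phi)$. Since $\kappa$ is bounded between $\kappa_{\min}>0$ and $\kappa_{\max}$ on $\gr\gamma\cap\cR$, the term $1/\kappa(x(\bar\phi))$ lies in a bounded interval; the subtle point is that we need cancellation between $1/\kappa$ and $f(\bar\phi)$, not just boundedness. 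By Lemma~\ref{lem13} we have $(R-d)\bigl(1-2d\kappa_{\max}\tfrac{R}{R+d}\bigr)\le f(\bar\phi)\le R+d$, so $f(\bar\phi) = R + \cO(d)$ as $d\to 0+$. Likewise, because the curve meets the osculating configuration and $\kappa$ is continuous, $1/\kappa(x(\bar\phi)) = R + \cO(\Delta\phi)$, and Lemma~\ref{lem12} gives $\Delta\phi \in \cO(L)$ together with $L \in \cO(L_\gamma)$ (the central length and the arc length are comparable up to the additive $4d$ term, which is lower order). Hence $f''(\bar\phi) = \cO(d) + \cO(L_\gamma)$, which by itself only yields $\cO(L_\gamma)$, not $\cO(d/L_\gamma^2)$.

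To get the sharper bound $\cO(d/L_\gamma^2)$ I would instead work directly with the support function and exploit the fact that $\bar\phi$ is the \emph{midpoint} of $[\phi_1,\phi_2]$, so that the first-order term in a Taylor/finite-difference expansion drops out. Concretely, write a second-order Taylor expansion of $f$ around $\bar\phi$ evaluated at $\phi_1$ and $\phi_2$; adding the two and using $\phi_1-\bar\phi = -(\phi_2-\bar\phi) = -\Delta\phi/2$ gives
\[
f(\phi_1) + f(\phi_2) - 2f(\bar\phi) = \tfrac14 (\Delta\phi)^2\, f''(\bar\phi) + o\bigl((\Delta\phi)^2\bigr),
\]
so that
\[
f''(\bar\phi) = \frac{4\bigl(f(\phi_1)+f(\phi_2)-2f(\bar\phi)\bigr)}{(\Delta\phi)^2} + o(1).
\]
Now the numerator must be estimated: at the endpoints $\phi_1,\phi_2$ the normals $N(\phi_i)$ are the true normals at $x(\theta_i)$, so $f(\phi_i) = \|x(\theta_i)-M\|$, and both $\|x(\theta_i)-M\|$ and $f(\bar\phi)$ differ from $R$ only by $\cO(d)$ (again by the radial confinement $R-d\le \|x(\theta)-M\|\le R+d$ and Lemma~\ref{lem13}). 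Thus the numerator is $\cO(d)$. For the denominator, Lemma~\ref{lem12} gives $\Delta\phi \geq \kappa_{\min}L(R-d)/R$, and since $L$ is comparable to $L_\gamma$ up to the $4d$ correction, $(\Delta\phi)^{-2} \in \cO(L_\gamma^{-2})$. Combining, $f''(\bar\phi) \in \cO(d/L_\gamma^2)$.

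The main obstacle will be making the Taylor-expansion argument rigorous uniformly in $h$: the remainder term $o((\Delta\phi)^2)$ involves $f'''$ (equivalently $\kappa'$) on the small interval $[\phi_1,\phi_2]$, and one must check that the ``$o(1)$'' contribution it produces is genuinely dominated by the $\cO(d/L_\gamma^2)$ main term rather than merely by $o(1)$ — this requires knowing how $\Delta\phi$, $d$, and $L_\gamma$ scale against one another, which is exactly where Lemma~\ref{lem14} ($d\in\cO(L_\gamma^\ell)$ with $\ell\geq 3$) enters: it guarantees $d/L_\gamma^2 \to 0$, so the asymptotic statement is consistent, and more importantly it lets one absorb the remainder. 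A secondary technical point is bounding $f(\phi_1)+f(\phi_2)-2f(\bar\phi)$ from above by $\cO(d)$ with the correct sign behavior; here convexity of $\gamma$ (hence $f+f''\geq 0$) and the explicit two-sided bounds of Lemma~\ref{lem13} should suffice, but care is needed because $\phi_i$ and $\theta_i$ differ (see Figure~\ref{ringgraph}), so $f(\phi_i)$ is the distance $\|x(\theta_i)-M\|$ only at the endpoints, not at interior points.
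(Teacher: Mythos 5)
Your skeleton is the same as the paper's: expand $f$ symmetrically about the midpoint $\bar\phi$ so the odd terms cancel, bound the second difference $f(\phi_1)-2f(\bar\phi)+f(\phi_2)$ by $\mathcal{O}(d)$ via Lemma \ref{lem13}, and bound $(\Delta\phi)^{-2}$ by $\mathcal{O}(L_\gamma^{-2})$ via Lemma \ref{lem12}. But the step you yourself flag as ``the main obstacle'' --- controlling the Taylor remainder --- is exactly the substance of the paper's proof, and your proposal does not resolve it. Writing $f''(\bar\phi)=4\bigl(f(\phi_1)-2f(\bar\phi)+f(\phi_2)\bigr)/(\Delta\phi)^2+o(1)$ proves nothing, because the target bound $\mathcal{O}(d/L_\gamma^2)$ is itself $o(1)$ (indeed $d\in\mathcal{O}(L_\gamma^3)$, so $d/L_\gamma^2\in\mathcal{O}(L_\gamma)$); an additive $o(1)$ error can completely swamp the main term. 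Your appeal to Lemma \ref{lem14} (``it guarantees $d/L_\gamma^2\to 0$ \dots and lets one absorb the remainder'') is not an argument: knowing $d\in\mathcal{O}(L_\gamma^\ell)$ makes the target \emph{smaller}, hence the absorption \emph{harder}, and you never exhibit a mechanism for it. Concretely, after dividing by $(\Delta\phi)^2/4$ the leftover is of size $(\Delta\phi)^2\sup|f^{(4)}|\asymp L_\gamma^2|f^{(4)}|$; for $\ell=3$ or $4$ a uniform bound on $f^{(4)}$ would suffice (and you do not establish even that), but for $\ell\geq 5$ one needs $|f^{(4)}|\lesssim L_\gamma^{\ell-4}$, i.e.\ genuine smallness of the higher derivatives of $f$, which no boundedness assumption gives.

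The missing idea is the paper's use of Lemma \ref{traeger} as a differential relation: from $f+f''=1/\kappa$ one gets $f^{(n)}=\bigl(\text{terms in }\kappa',\dots,\kappa^{(n-2)}\bigr)-f^{(n-2)}$, and the flatness hypothesis \eqref{kabsch} then forces $f^{(k)}=-f^{(k-2)}$ for $3\leq k\leq \ell-1$, hence $f^{(k)}=\pm f''$ for all even $k\leq\ell-1$. This lets the paper fold every Taylor term of order $<\ell$ back into the coefficient of $f''(\bar\phi)$ (producing only a harmless factor $1\pm o((\Delta\phi)^2)$), so the uncontrolled remainder starts only at order $\ell$ and contributes $\mathcal{O}((\Delta\phi)^{\ell-2})\subset\mathcal{O}(L_\gamma^{\ell-2})$, which is then absorbed into $\mathcal{O}(d/L_\gamma^2)$ because $d$ is comparable to $L_\gamma^{\ell}$ by the Taylor expansion \eqref{taylord} with $\kappa^{(\ell-2)}(0)\neq 0$. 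Without this transfer of the vanishing of $\kappa^{(k)}(0)$ to the derivatives of $f$, your argument cannot close; so as it stands the proposal has a genuine gap, even though the finite-difference setup and the use of Lemmas \ref{lem12} and \ref{lem13} coincide with the paper.
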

\begin{proof}
By induction on the order of derivative of $f$ with initial induction step $f^{(3)}=(1/\kappa)'-f'=\kappa'/\kappa^2-f'$ (cf. Lemma \ref{traeger}), one can show that for $n\geq 3$:
\begin{align*}
f^{(n)}=\left(\sum_{1\leq m\leq n-2}\kappa^{-(m+1)}\prod_{1\leq i_k\leq n-2,\atop 
 i_1 + \ldots + i _{m}=n-2}c_{i_km}\kappa^{(i_k)}\right)\\ \\
-f^{(n-2)},\qquad c_{i_km}\in\mathbb{R}.
\end{align*}
Using condition (\ref{kabsch}), one obtains
\be
f^{(k)} = -f^{(k-2)},\quad \text{ for all } 3\leq k\leq \ell-1,\\\label{zweite}
\ee
implying that
\[
f^{(k)} = \pm f'', \quad \text{ for all even } k\leq \ell-1.
\]
The Taylor expansion of $f$ at $\bar{\phi}=(\phi_1+\phi_2)/2$ gives
\begin{align*}
f(\phi_1)+f(\phi_2)=2f(\bar{\phi})+\frac{(\Delta\phi)^2} 4f''(\bar{\phi})\\
+\sum_{n=4\atop n \text{ even}}^{\infty}\frac{(\Delta\phi)^n}{2^{n-1}n!}f^{(n)}(\bar{\phi}),
\end{align*}
which together with (\ref{zweite}) implies
\begin{align*}
f(\phi_1)+ &f(\phi_2) = 2f(\bar{\phi})+\frac{(\Delta\phi)^2} 4f''(\bar{\phi})\\
&\pm\sum_{n=4\atop n \text{ even}}^{l-1}\frac{(\Delta\phi)^n}{2^{n-1}n!}f''(\bar{\phi})+\sum_{n=l\atop n \text{ even}}^{\infty}\frac{(\Delta\phi)^n}{2^{n-1}n!}f^{(n)}(\bar{\phi}).
\end{align*}

Consequently,
\begin{align}\label{expansion}
f''(\bar{\phi}) &= \frac{4(f(\phi_1)-2f(\bar{\phi})+f(\phi_2))}{(\Delta\phi)^2\pm\frac{1}{48}(\Delta\phi)^4\pm\dots} +
\mathcal{O}((\Delta\phi)^{\ell-2})\nonumber\\
&= \frac{4(f(\phi_1)-2f(\bar{\phi})+f(\phi_2))}{(\Delta\phi)^2 (1\pm o((\Delta\phi)^2)} +
\mathcal{O}((\Delta\phi)^{\ell-2}))\nonumber\\
&= \frac{4(f(\phi_1)-2f(\bar{\phi})+f(\phi_2))}{(\Delta\phi)^2}\, (1\mp o((\Delta\phi)^2)\nonumber\\
&\quad + \mathcal{O}((\Delta\phi)^{\ell-2})),\quad\text{for $\Delta\phi\to 0+$.}
\end{align}
Note that the lower bound in \eqref{phi} shows that $\Delta\phi\to 0+$ implies $L_\gamma\to 0+$, which by \eqref{dL} implies $d\to 0+$. Therefore, substituting (\ref{phi}) and (\ref{cos2}) into (\ref{expansion}), we obtain for $L_\gamma\to 0 +$ and $d \to 0 +$:
\begin{align*}
|f''(\bar{\phi})| &\leq 16\,R^2 \left(\frac{R+d+(R-d)\,R\kappa_{\max}}{\kappa_{\min}^2(R-d)^2(R+d)}\right)\;\times\\
&\quad\left(\frac{d}{L_\gamma^2}\right)\,\left(1 \mp o(L_\gamma^2)\right) + \mathcal{O}(L_\gamma^{\ell-2}).
\end{align*}
With $d\in\mathcal{O}(L_\gamma^{\ell})$ from (\ref{dL}), this gives
\begin{align*}
|f''(\bar{\phi})|&\in \mathcal{O}\left(\frac d {L_\gamma^2}\right)+\mathcal{O}(L_\gamma^{(\ell-2)})\nonumber\\
&\in\mathcal{O}\left(\frac d {L_\gamma^2}\right)\left(1+\mathcal{O}\left(\frac{L_\gamma^\ell}{d}\right)\right)\nonumber\\
&\in\mathcal{O}\left(\frac d {L_\gamma^2}\right)\left(1+\mathcal{O}(1)\right)
\in\mathcal{O}\left(\frac d {L_\gamma^2}\right).\qedhere
\end{align*}
\end{proof}

The next lemma is an extension of Lemma 1 in \cite{rouss}. Here, we explicitly give an order of approximation.
\begin{lemma}\label{lem16}
Assume that the hypotheses of Lemmas \ref{lem12}, \ref{lem13}, and \ref{lem14} are satisfied. Then the following estimate holds for all $x\in \mathcal{R} \cap \gr \gamma$:
\begin{align*}
|R-1/\kappa(x)|\in\mathcal{O}\left(\frac d {L_\gamma^2}\right), \text{ as } L_\gamma, d\rightarrow 0+.
\end{align*}
\end{lemma}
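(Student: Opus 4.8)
The plan is to reduce the estimate, through Lemma~\ref{traeger}, to two independent bounds and then settle each. Writing $x=\gamma(\phi)$ with $\phi\in[\phi_1,\phi_2]$, Lemma~\ref{traeger} gives $1/\kappa(x)=f(\phi)+f''(\phi)$, so
\[
|R-1/\kappa(x)|\le|R-f(\phi)|+|f''(\phi)|.
\]
The first summand is immediate from Lemma~\ref{lem13}: its two-sided bound yields $|R-f(\phi)|\le d\bigl(1+2R\kappa_{\max}(R-d)/(R+d)\bigr)\in\mathcal{O}(d)$ uniformly in $\phi$, and since $L_\gamma\to 0+$ we have $\mathcal{O}(d)\subseteq\mathcal{O}(d/L_\gamma^2)$. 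So the whole statement reduces to $|f''(\phi)|\in\mathcal{O}(d/L_\gamma^2)$ uniformly on $[\phi_1,\phi_2]$; the preceding lemma supplies this only at the midpoint $\bar\phi$, so something more is needed to reach every $x$.

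For that I would bound the curvature variation directly rather than $f''$. Parametrize $\mathcal{R}\cap\gr\gamma$ by arc length $s\in[-L_1,L_2]$ as in Lemma~\ref{lem14}, with $\gamma(0)$ the osculating point, so that $1/\kappa(\gamma(0))=R$ and $|s|\le\max(L_1,L_2)\le L_\gamma$. By the chain rule (or induction on $k$), each derivative $(\kappa^{-1})^{(k)}$ with $k\ge 1$ is a polynomial in $\kappa^{-1},\kappa',\dots,\kappa^{(k)}$ in which every monomial carries a factor $\kappa^{(i)}$ with $i\ge 1$; hence condition~\eqref{kabsch}, i.e.\ $\kappa^{(k)}(0)=0$ for $1\le k\le\ell-3$, forces $(\kappa^{-1})^{(k)}(0)=0$ for the same $k$. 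Taylor expansion of $s\mapsto 1/\kappa(\gamma(s))$ at $s=0$ then gives
\[
|R-1/\kappa(x)|=\bigl|1/\kappa(\gamma(0))-1/\kappa(\gamma(s))\bigr|\in\mathcal{O}(|s|^{\ell-2})\subseteq\mathcal{O}(L_\gamma^{\ell-2}),
\]
with implied constant controlled by a uniform bound on $(\kappa^{-1}\circ\gamma)^{(\ell-2)}$ on a fixed neighbourhood of $\gamma(0)$.

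It remains to turn $\mathcal{O}(L_\gamma^{\ell-2})$ into $\mathcal{O}(d/L_\gamma^2)$, i.e.\ to show $L_\gamma^\ell\in\mathcal{O}(d)$. Lemma~\ref{lem14} only gives the upper bound $d\in\mathcal{O}(L_\gamma^\ell)$, so the missing piece is a matching lower bound, which I would read off the leading term of the expansion~\eqref{taylord}. Since $\mathcal{R}$ simply covers $\gamma$, every $\gamma(s)$ lies in $\mathcal{R}$ and hence $|R-r(s)|\le d$ for all $s\in[-L_1,L_2]$; evaluating~\eqref{taylord} at the endpoint $s^{*}$ with $|s^{*}|=\max(L_1,L_2)\ge L_\gamma/2$ gives $d\ge\frac{|\kappa^{(\ell-2)}(0)|}{2^{\ell+1}\ell!}L_\gamma^\ell$ once $L_\gamma$ is small enough. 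Together with the upper bound this yields $d\asymp L_\gamma^\ell$, so $L_\gamma^{\ell-2}=L_\gamma^\ell/L_\gamma^2\in\mathcal{O}(d/L_\gamma^2)$ and the asserted estimate follows.

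The crux is exactly this two-sided comparison $d\asymp L_\gamma^\ell$: the upper half is Lemma~\ref{lem14}, but the lower half has to be extracted carefully — one must make sure that the covering condition forces the ring thickness $d$ to be genuinely of order $\max(L_1,L_2)^\ell$ and not merely $o$ of it, which is where the nonvanishing of $\kappa^{(\ell-2)}(0)$ enters. A more conservative alternative, staying inside the support-function framework, would be to redo the Taylor computation of the lemma preceding Lemma~\ref{lem16} with the expansion anchored at an arbitrary $\phi_0\in[\phi_1,\phi_2]$ rather than at $\bar\phi$, using the identities $f^{(k)}=-f^{(k-2)}$ (valid at the osculating angle) to absorb the higher-order remainder; but this variant degrades near the ends of the arc, so the curvature-variation route is the one I would follow.
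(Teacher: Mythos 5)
Your proposal is correct, but it takes a genuinely different route from the paper's. The paper applies Lemma~\ref{traeger} only at the mid-angle $\bar\phi$, combines Lemma~\ref{lem13} with the preceding estimate $|f''(\bar\phi)|\in\mathcal{O}(d/L_\gamma^2)$ to control $|R-1/\kappa(\bar x)|$, and then disposes of all other $x\in\mathcal{R}\cap\gr\gamma$ with the one-line remark that $x\to\bar x$ as $L_\gamma\to 0$, without giving a rate for that last step. You instead abandon the support function after the first reduction and argue directly on the curvature: $1/\kappa(\gamma(0))=R$ at the osculating point, condition \eqref{kabsch} forces $(\kappa^{-1}\circ\gamma)^{(k)}(0)=0$ for $1\le k\le\ell-3$, so a Taylor expansion in arc length gives $|R-1/\kappa(x)|\in\mathcal{O}(L_\gamma^{\ell-2})$ uniformly on the covered arc, and you convert this into $\mathcal{O}(d/L_\gamma^2)$ via the matching lower bound $d\gtrsim L_\gamma^{\ell}$, obtained by evaluating \eqref{taylord} at the arc endpoint $s^{*}$ with $|s^{*}|=\max(L_1,L_2)\ge L_\gamma/2$, which lies in $\mathcal{R}$ because the ring simply covers $\gamma$, so that $|R-r(s^{*})|\le d$. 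This two-sided comparison $d\asymp L_\gamma^{\ell}$ is legitimate under the stated hypotheses (for $L_\gamma$ small, which is the asymptotic regime of the lemma), and it is in fact exactly the quantitative ingredient the paper's closing sentence silently requires: to promote the bound at $\bar x$ to an arbitrary $x$ one needs $|1/\kappa(x)-1/\kappa(\bar x)|\in\mathcal{O}(L_\gamma^{\ell-2})\subseteq\mathcal{O}(d/L_\gamma^2)$, i.e.\ precisely $L_\gamma^{\ell}\in\mathcal{O}(d)$. So your argument is valid (up to the same implicit dependence of constants on $\gamma$ and the osculating point that pervades the paper's own lemmas) and makes the uniformity in $x$ explicit, at the price of never using the $f''$ estimate that the paper developed for this purpose; note that your opening paragraph (Lemma~\ref{traeger} plus Lemma~\ref{lem13}) ends up serving only as motivation and could be omitted from the final write-up.
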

\begin{proof}
We use Lemma \ref{traeger}. Let $\bar{x}\in \gr\gamma\cap\mathcal{R}$ have normal $N(\bar{\phi})$. Then,
\begin{align*}
&1/\kappa(\bar{x})=f(\bar{\phi})+f''(\bar{\phi})\leq R+d+\mathcal{O}\left(\frac d{L_\gamma^2}\right)\\
\Rightarrow\quad&|1/\kappa(\bar{x})-R|\in\mathcal{O}\left(\frac d{L_\gamma^2}\right).
\end{align*}
By \eqref{kabsch}, we have that $L_\gamma\rightarrow 0$, and for all $x\in \gr\gamma\cap\mathcal{R}$ it follows that $x\rightarrow \bar{x}$, which completes the proof.
\end{proof}

Employing the results from the above lemmas, we show that the length $L_\gamma$ of the curve $\gamma$ in the ring segment $\mathcal{R}$ does not converge too quickly to $0$ when the thickness $d\rightarrow 0+$. This then implies the convergence of our procedure.

\begin{lemma}\label{ring2}
Suppose the hypotheses of Lemmas \ref{lem13} and \ref{lem14} are satisfied. Then,
\[
d\in\mathcal{O}(L_\gamma^3), \quad\text{ for $L_\gamma\to 0+.$}
\]
\end{lemma}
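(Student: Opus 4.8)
The plan is to read the claim off from Lemma~\ref{lem14}, the only work being to replace the exponent $\ell$ there by the uniform exponent $3$. Among the hypotheses inherited from Lemma~\ref{lem14} is the existence of an integer $\ell\geq 3$ with $\kappa^{(k)}(0)=0$ for $k=1,\dots,\ell-3$ and $\kappa^{(\ell-2)}(0)\neq 0$; for this $\ell$, Lemma~\ref{lem14} yields $d\in\cO(L_\gamma^{\ell})$ as $L_\gamma\to 0+$. Since $L_\gamma\to 0+$ we may restrict to $0<L_\gamma<1$, whence $L_\gamma^{\ell}=L_\gamma^{\ell-3}\,L_\gamma^{3}\leq L_\gamma^{3}$ because $\ell-3\geq 0$. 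Therefore $\cO(L_\gamma^{\ell})\subseteq\cO(L_\gamma^{3})$, and $d\in\cO(L_\gamma^{3})$, as claimed.

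For robustness I would also record a self-contained argument that does not even invoke the index $\ell$. Writing, as in the proof of Lemma~\ref{lem14}, $d(s):=R-r(s)$ for the radial deviation of $\gr\gamma\cap\cR$ from the osculating circle $\mathcal{K}(M,R)$ in arc-length polar coordinates, the computations there already establish, using only the normalisation $r(0)=R$, $r'(0)=0$, $\kappa(0)=1/R$ together with $\theta'(0)=1/R$ from \eqref{thet1} and the identity $\kappa=r\theta'^{2}-r''$, that
\[
d(0)=0,\qquad d'(0)=-r'(0)=0,\qquad d''(0)=\kappa(0)-\theta'(0)^{2}r(0)=\tfrac{1}{R}-\tfrac{1}{R}=0,
\]
and none of these three identities uses the flatness condition \eqref{kabsch}. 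Since $\gamma$ is four times differentiable, $d$ is at least three times continuously differentiable near $s=0$, so Taylor's theorem with Lagrange remainder gives $d(s)=\tfrac{1}{6}\,d'''(\xi_s)\,s^{3}$ for some $\xi_s$ between $0$ and $s$. Bounding $|d'''|$ on a fixed neighbourhood of $0$ and using $|s|\leq\max\{L_1,L_2\}\leq L_\gamma$ on the covered portion of $\gamma$ gives $\sup_{|s|\leq L_\gamma}|R-r(s)|\leq C\,L_\gamma^{3}$; since the smallest thickness $d$ for which $\cR$ simply covers $\gr\gamma\cap\cR$ equals $\sup_{|s|\leq L_\gamma}|R-r(s)|$, this again yields $d\in\cO(L_\gamma^{3})$.

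The points requiring care are purely bookkeeping: one must keep the ring-thickness parameter $d$ distinct from the function $s\mapsto R-r(s)$ and note that the minimal admissible $d$ is exactly the supremum of $|R-r(s)|$ over the covered arc; and, in the second argument, one should confirm that four-fold differentiability of $\gamma$ really does make $d$ three times continuously differentiable (this is implicit in the inductions performed in the proof of Lemma~\ref{lem14}). I do not expect any genuine obstacle here: the substance of the statement already resides in Lemma~\ref{lem14}, and this lemma merely pins the exponent down to the value $3$ so that $d\in\cO(L_\gamma^{3})$ combines with Lemma~\ref{lem16} (where $|R-1/\kappa(x)|\in\cO(d/L_\gamma^{2})$) to produce the $\cO(h^{1/3})$ rate of the main theorem.
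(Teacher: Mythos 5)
Your argument is correct for the lemma as literally stated, but it is not the paper's route. You take the imported hypotheses of Lemma~\ref{lem14} at face value, so the lemma collapses to a one-line corollary ($\ell\geq 3$ and $L_\gamma<1$ give $\cO(L_\gamma^{\ell})\subseteq\cO(L_\gamma^{3})$), with your second argument being essentially the proof of Lemma~\ref{lem14} truncated at third order; a genuinely useful by-product of that version is the observation that the flatness condition \eqref{kabsch} plays no role for the exponent $3$ --- only the osculation normalisation $r(0)=R$, $r'(0)=0$, $\kappa(0)=1/R$ is used. The paper's proof, by contrast, deliberately avoids the exact osculation conditions: it extracts from the simple-covering geometry a point of near-tangency with $\sin|\phi-\theta|_{\min}\in\cO(d/L_\gamma)$ (by integrating $dr=\sin|\phi-\theta|\,ds_\theta$ across the ring), uses $|R-r(0)|\leq d$ and the curvature estimate $|\kappa(0)-1/R|\in\cO(d/L_\gamma^{2})$ from Lemma~\ref{lem16}, and only then Taylor-expands $R-r(s)$ to third order to conclude $d\in\cO(L_\gamma^{3})$. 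That extra work is what makes the lemma usable in Theorems~\ref{haupt} and \ref{haupt2}, where the ring is centred on the minimal separating circle of an MDCA, which in general osculates $\partial X$ nowhere and satisfies no condition like \eqref{kabsch}; your shortcut inherits the osculating-circle hypothesis and so would not cover that situation, even though it settles the statement as written. Your bookkeeping points are sound (the identification of the minimal admissible $d$ with $\sup_{|s|\leq L_\gamma}|R-r(s)|$ is the same one the paper makes implicitly at the end of Lemma~\ref{lem14}), and if anything the paper's own closing step, which yields $d\leq\cO(d)+\cO(L_\gamma^{3})$ and needs the implied constant in the $\cO(d)$ term to be less than $1$, is the more delicate of the two.
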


\begin{proof}
First, we show that for all $d>0$, there exists an $x(\theta_0) = \gamma(\phi_{0})\in\mathcal{R}\cap \gr\gamma$ with normal $N(\phi_0)$, so that
$\sin(\theta_0-\phi_0)\in\mathcal{O}(d/L_\gamma)$ for $d \to 0 +$.

From Figure \ref{diff} one deduces that
\[
\sin |\theta-\phi| =\frac{dr}{ds_{\theta}},
\]
from which it follows that
\begin{align*}
\frac{L_\gamma(R-d)}R\,\sin(|\phi-\theta|_{\min})\leq L_\gamma\,\sin(|\phi-\theta|_{\min})\\
\leq\int_{\phi_1}^{\phi_2}\sin(|\phi-\theta|)\,ds_\theta=\int_{r_1}^{r_2}\,dr\leq 2d.
\end{align*}
Thus, for $d\to 0+$,
\begin{align}\label{sin}
\sin|\phi-\theta|_{\min}\leq \frac{2dR}{L_\gamma(R-d)}\in\mathcal{O}\left(\frac d L_\gamma\right),
\end{align}
for $L_\gamma\to 0+$ and $d\to 0+$.
\ml
Now, let $\gamma$ be parametrized by arc length $s$, so that $x(\theta_0)=\gamma(0)$. W.l.o.g. assume that $r'(0)\leq 0$. (Otherwise, we consider the mirror image.) Then $\sin|\theta-\phi|_{\min}=-r'(0)$; see Figure \ref{diff}.

\begin{figure}[h]
\begin{center}
\includegraphics[width=0.45\textwidth]{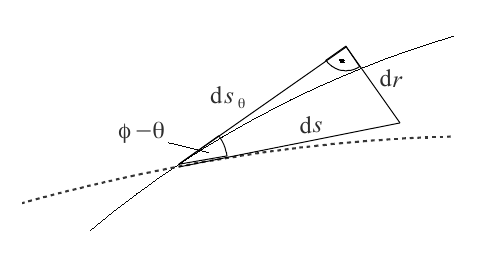}
\caption{Relationship between $ds$, $dr$ and the difference $\phi-\theta$. The dotted curve depicts $\gamma$, the continuous curve  shows the central circular curve of the ring segment $\mathcal{R}$.}\label{diff}
\end{center}
\end{figure}

The proof of Lemma \ref{lem14} shows that
\begin{align*}
\kappa(s)&=\theta'^2(s)r(s)-r''(s),\\
d(s)&=R-r(s).
\end{align*}
Note that 
\begin{align*}
r(s) & = r(0) + r'(0) s + o(s^2)\\
&  = R - s\,\sin|\theta-\phi|_{\min} + o(s^2), \quad\text{as $s\to 0+$.}
\end{align*}
Therefore, for $d\to 0+$,
\begin{align*}
d(0)&\in R-R+\mathcal{O}(d)\in\mathcal{O}(d),\\
d'(0)&=-r'(0)=\sin|\theta-\phi|_{\min}\in\mathcal{O}\left(\frac d L_\gamma\right),
\end{align*}
and
\begin{align*}
d''(0) &=\kappa(0)-\theta'^2(0)r(0)\in\kappa(0)-\frac1{R^2}(R-\mathcal{O}(d))\\
&\in\kappa(0)-\frac1R+\mathcal{O}(d).
\end{align*}
By Lemma \ref{lem14}, we can write this as
\begin{align*}
d''(0)\in\mathcal{O}\left(\frac{d}{L_\gamma^2}\right)+\mathcal{O}(d) \in\mathcal{O}\left(\frac{d}{L_\gamma^2}\right).
\end{align*}
Therefore, the Taylor expansion of $d$ in terms of $L_\gamma$ yields
\begin{align*}
d(L_\gamma)\in\mathcal{O}(d)+L_\gamma\mathcal{O}\left(\frac{d}{L_\gamma}\right)+L^2_\gamma\mathcal{O}\left(\frac d {L_\gamma^2}\right)+\mathcal{O}(L_\gamma^3),
\end{align*}
which implies the claim for $L_\gamma\to 0 +$.\qedhere

\end{proof}

\begin{theorem}\label{main}
Under the same hypotheses and with the same notation as in Lemmas \ref{lem12}, \ref{lem13}, and \ref{lem14}, we obtain for all $x\in\mathcal{R}\cap \gr\gamma$ the estimate
\[
\left|R-\frac1{\kappa(x)}\right|\in\mathcal{O}(d^{1/3}).
\]
\end{theorem}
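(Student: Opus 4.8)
The plan is to read the claim off directly from Lemma~\ref{lem16} and the sharpened growth estimate of Lemma~\ref{ring2}; no further geometric construction is required. Lemma~\ref{lem16} already provides, for every $x\in\mathcal{R}\cap\gr\gamma$,
\[
\left|R-\frac1{\kappa(x)}\right|\in\mathcal{O}\!\left(\frac d{L_\gamma^2}\right),\quad\text{as }L_\gamma,d\to0+,
\]
so the only thing left to do is to show that the right-hand side is in fact $\mathcal{O}(d^{1/3})$, i.e.\ to eliminate the arc length $L_\gamma$ in favour of the ring thickness $d$.

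For this I would invoke Lemma~\ref{ring2}, which asserts $d\in\mathcal{O}(L_\gamma^3)$ as $L_\gamma\to0+$; equivalently, the ratio $d/L_\gamma^3$ stays bounded. The key algebraic identity is
\[
\frac d{L_\gamma^2}=d^{1/3}\left(\frac d{L_\gamma^3}\right)^{2/3},
\]
so boundedness of $d/L_\gamma^3$ immediately yields $d/L_\gamma^2\in\mathcal{O}(d^{1/3})$. Chaining this with the estimate of Lemma~\ref{lem16} gives $\left|R-1/\kappa(x)\right|\in\mathcal{O}(d^{1/3})$ uniformly in $x\in\mathcal{R}\cap\gr\gamma$, which is the assertion of the theorem.

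Since all of the analytic work has already been done in Lemmas~\ref{lem14}, \ref{ring2} and \ref{lem16}, I do not anticipate any genuine obstacle; the only point that deserves a remark is the change of asymptotic regime. Lemma~\ref{lem16} is stated for $L_\gamma,d\to0+$ jointly, while the $\mathcal{O}$ in the theorem refers to $d\to0+$. This is unproblematic: Lemma~\ref{ring2} (through Lemma~\ref{lem14}) couples the two quantities, with $d$ controlled by $L_\gamma$, and in the complementary case where $L_\gamma$ stays bounded away from $0$ one trivially has $d/L_\gamma^2\in\mathcal{O}(d)\subseteq\mathcal{O}(d^{1/3})$. Hence the bound holds in every case, and --- importantly for the multigrid application --- the error is now expressed solely through the thickness $d$, which is the quantity directly tied to the grid size $h$.
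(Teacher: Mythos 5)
Your proposal is correct and follows essentially the same route as the paper: combine the estimate $\left|R-\frac{1}{\kappa(x)}\right|\in\mathcal{O}\left(\frac{d}{L_\gamma^2}\right)$ with $d\in\mathcal{O}(L_\gamma^3)$ from Lemma~\ref{ring2} to get $\frac{d}{L_\gamma^2}\in\mathcal{O}\left(\frac{d}{d^{2/3}}\right)=\mathcal{O}(d^{1/3})$. The only cosmetic difference is that you correctly attribute the first estimate to Lemma~\ref{lem16} (the paper's proof cites Lemma~\ref{lem14} for it, evidently a slip), and your explicit remark on the asymptotic regimes is a harmless, indeed welcome, clarification.
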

\begin{proof}
Substituting the result from Lemma \ref{ring2} into that of Lemma \ref{lem14} gives
\begin{align*}
&\left|R-\frac 1 {\kappa(x)}\right|\in\mathcal{O}\left(\frac d {L_\gamma^2}\right)\in\mathcal{O}\left(\frac d {d^{2/3}}\right)\in\mathcal{O}(d^{1/3}). \qedhere
\end{align*}
\end{proof}

The above theorem now implies the uniform multigrid convergence of the curvature estimator $\widehat{\kappa}_{MDCA}$.

\begin{theorem}\label{haupt}
Suppose $\mathbb{X}$ is a nonempty family of convex and compact subsets of $\mathbb{R}^2$. Further suppose that for all $X\in\mathbb{X}$ the curvature along the boundary $\partial X$ is continuously differentiable, strictly positive, and bounded from above and below by some positive constants. 

Then the multigrid curvature estimator $\widehat{\kappa}_{MDCA}$ is uniformly multigrid convergent for $\mathbb{X}$ with $\tau(h)\in\mathcal{O}(h^{1/3})$.
\end{theorem}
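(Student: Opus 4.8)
The plan is to reduce the statement to Theorem \ref{main}, applied once to each maximal digital circular arc of $\partial D_h(X)$, and then to close the gap between ``error along the arc that an MDCA covers'' and ``error at the prescribed boundary point $x$'' using the continuity of $\kappa$. Fix $X\in\mathbb{X}$ and put $\gamma=\partial X$; since $h$ enters only through the digitization $\partial D_h(X)$, it suffices to produce a single bound of order $h^{1/3}$ that is independent of $x$, of the edge $e$, and of the chosen $y\in e$. The first step is to quantify the digitization error. The Gau\ss{} digitization of a convex body is $\mathcal{O}(h)$-accurate, so $\gr\gamma$ and $\partial D_h(X)$ lie in mutual tubes of radius $\mathcal{O}(h)$. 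Fix an MDCA $A_l$ with smallest separating circle $\mathcal{K}(M_l,R^h_l)$, and let $\gamma_l\subset\gamma$ be the sub-arc lying ``under'' the edges of $A_l$, of length $L_l$. Each grid vertex of an edge of $A_l$ lies within $\mathcal{O}(h)$ of the flanking inner and outer pixel centres --- the former at distance $\le R^h_l$, the latter at distance $\ge R^h_l$ from $M_l$ --- hence within $\mathcal{O}(h)$ of $\mathcal{K}(M_l,R^h_l)$; therefore so does $\gamma_l$, and for $h$ small the convex arc $\gamma_l$ is simply covered by a ring segment $\mathcal{R}_l=\mathcal{R}(M_l,R^h_l,d_l,\theta_1^{(l)},\theta_2^{(l)})$ with $d_l\le c\,h$ for a constant $c$ independent of $l$ and $h$. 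The hypotheses on $\mathbb{X}$ --- curvature in $[\kappa_{\min},\kappa_{\max}]$ with $\kappa_{\min}>0$, together with the differentiability of $\gamma$ --- are exactly the standing assumptions of Lemmas \ref{lem12}--\ref{lem14} for $\mathcal{R}_l$ and $\gamma_l$ (with $\ell=3$ wherever $\kappa'\neq 0$), and for the fixed curve $\gamma$ the constants occurring there (Taylor-remainder bounds, and the threshold below which ``simply covers'' holds) can be taken uniform over all the sub-arcs $\gamma_l$.

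Next I would apply Theorem \ref{main} to each $\mathcal{R}_l$: with $d_l\le c\,h$ it gives $\bigl|R^h_l-1/\kappa(z)\bigr|\in\mathcal{O}(d_l^{1/3})\subseteq\mathcal{O}(h^{1/3})$ for all $z\in\gamma_l$, with a constant uniform in $l$. Since $1/\kappa(z)\in[1/\kappa_{\max},1/\kappa_{\min}]$ and this deviation tends uniformly to $0$, for $h$ below a threshold depending only on $X$ one has $R^h_l\in[\,1/(2\kappa_{\max}),\,2/\kappa_{\min}\,]$, and therefore
\[
\bigl|k^h(A_l)-\kappa(z)\bigr|=\frac{\kappa(z)}{R^h_l}\,\bigl|1/\kappa(z)-R^h_l\bigr|\le 2\kappa_{\max}^{2}\,\bigl|1/\kappa(z)-R^h_l\bigr|\in\mathcal{O}(h^{1/3}),
\]
uniformly in $l$ and in $z\in\gamma_l$. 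Thus every discrete curvature value $k^h(A_l)=1/R^h_l$ is $\mathcal{O}(h^{1/3})$-close to the true curvature at every point of $\gamma$ that $A_l$ covers.

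Finally I would transfer this to the point $x$. Given $x\in\partial X$, an edge $e\in\partial D_h(X)$ and $y\in e$ with $\|x-y\|_1\le h$, the estimator returns $\widehat{\kappa}^h_{\text{MDCA}}(\partial D_h(X),e)=k^h(A_l)$ for the MDCA $A_l$ with $e\in E_l$. Using the structural properties of sets of MDCAs from \cite{rouss} --- the MDCAs cover $\partial D_h(X)$ and consecutive ones overlap --- the edge $e$ lies on $A_l$, or at most a bounded number of edges (hence $\mathcal{O}(h)$ of arc length) away from $\gamma_l$; so there is $z\in\gamma_l$ with $|\kappa(X,x)-\kappa(z)|\in\mathcal{O}(h)$ by the Lipschitz continuity of $\kappa$ on the compact curve $\gamma$. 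Combining with the previous step,
\[
\bigl|\widehat{\kappa}^h_{\text{MDCA}}(\partial D_h(X),e)-\kappa(X,x)\bigr|\le\bigl|k^h(A_l)-\kappa(z)\bigr|+\bigl|\kappa(z)-\kappa(X,x)\bigr|\in\mathcal{O}(h^{1/3}),
\]
with a bound depending only on $X$ (through $\kappa_{\min}$, $\kappa_{\max}$, $\mathrm{Lip}(\kappa)$, the digitization constant, and the constants of Lemmas \ref{lem12}--\ref{lem14} and Theorem \ref{main}). Taking $\tau(h)$ to be this bound yields uniform multigrid convergence with $\tau(h)\in\mathcal{O}(h^{1/3})$.

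The analytic heart is the middle step, which is just Theorem \ref{main}; the real work lies in the two matching arguments. The hard part will be, on the one hand, making the first step fully rigorous --- verifying that the ring segment built from the smallest separating circle of $A_l$ genuinely simply covers $\gamma_l$, that $d_l$ really is $\mathcal{O}(h)$, and that the osculation normalization of Lemma \ref{lem14} ($r(0)=R$, $r'(0)=0$, $\kappa(0)=1/R$) is met, or only harmlessly perturbed, for $\mathcal{K}(M_l,R^h_l)$ (one expects this from the minimality of the separating circle, which forces tangential contact with $\gamma$) --- and, on the other hand, the combinatorial last step: extracting from the MDCA construction of \cite{rouss} that the edge assigned to $A_l$ by the partition $E_l$ can never be more than $\mathcal{O}(h)$ in arc length away from the portion of $\gamma$ that $A_l$ actually separates, together with the uniformity of all the constants over the (possibly many) MDCAs of the fixed curve. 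I expect this last point to be the most delicate.
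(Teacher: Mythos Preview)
Your approach is essentially the same as the paper's: build a ring segment from the smallest separating circle of the relevant MDCA, observe that $\partial X$ is simply covered by it with thickness $\mathcal{O}(h)$, and invoke Theorem~\ref{main}. The paper is more direct on two points. First, it takes $d=\sqrt{2}h$ explicitly (the pixel half-diagonal), so your ``$d_l\le c\,h$ for some constant $c$'' becomes a concrete $d=\sqrt{2}h$ with no $l$-dependence. Second, and more importantly, the paper dispenses with your final Lipschitz bridging step entirely: it simply asserts that $x,y\in\mathcal{R}$, so Theorem~\ref{main} applies at $x$ itself and the bound $|R-1/\kappa(x)|\in\mathcal{O}(h^{1/3})$ is immediate---no intermediate point $z\in\gamma_l$ and no $|\kappa(z)-\kappa(x)|$ term is needed. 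The ``combinatorial last step'' you flag as most delicate thus collapses in the paper to the one-line claim $x\in\mathcal{R}$, justified implicitly by the fact that $\partial X$ lies between the inner and outer pixels of $A_l$ and hence in $\mathcal{R}$ over the full angular range $[\theta_1,\theta_2]$. Your worry about the osculation normalisation of Lemma~\ref{lem14} (that $\mathcal{K}(M_l,R^h_l)$ need not be an actual osculating circle of $\gamma$) is legitimate; the paper does not address it either, simply declaring that ``all hypotheses of Theorem~\ref{main} are thus satisfied''.
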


\begin{figure}[h]
\begin{center}
\includegraphics[width=0.4\textwidth]{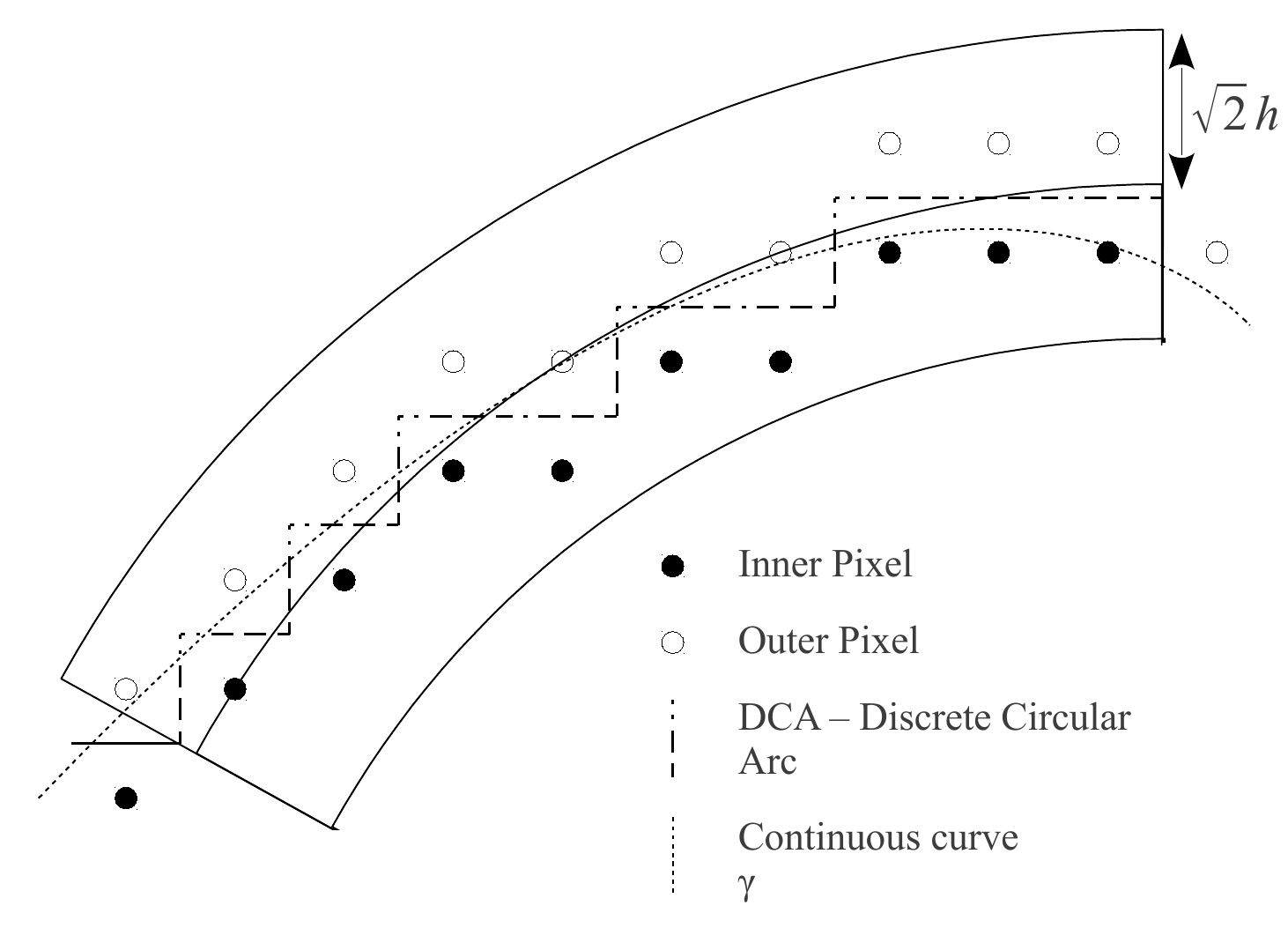}
\caption{Illustration of the proof of  Theorem \ref{haupt}}\label{proofgraph}
\end{center}
\end{figure}

\begin{proof}
The original proof can be found in \cite{rouss}. We reproduce it here in our notation and terminology since it uses arguments that will also be employed in Theorem \ref{haupt2} below, which uses weaker hypotheses.
 
Assume that $X\in\mathbb{X}$, $h>0$, $x\in\partial X$, $e\in\partial D_h(X)$ and $y\in e$ with $\|x-y\|_1<h$ are given. Suppose $(A_i)_{i\in\{1,\dots,n\}}$ is the set of MDCAs for $\partial D_h(X)$. Then $y$ is closer to $e (A_i)$ than to any other edge. Let $\mathcal{K}(m,R)$ be the circle with smallest radius that separates the inner pixels $\{p^i\}$ and outer pixels $\{p^o\}$ of $A_i$. Let $\theta_1$ and $\theta_2$  be the polar angles of the first and last edge of $A_i$ with respect to the center $m$. Let $\mathcal{R}$ be the associated $(m,R,\sqrt{2}h,\theta_1,\theta_2)$-ring. As $\mathcal{K}(m,R)$ separates the pixels $\{p^i\}$ from the pixels $\{p^o\}$ and $d$ was chosen to be equal to $\sqrt{2}h$, all these pixels lie inside of $\mathcal{R}$. Moreover, since $\partial X$ lies between $\{p^i\}$ and $\{p^o\}$ it is also contained in $\mathcal{R}$. It follows from \cite{topo} that for small enough $h>0$ the topologies of $\partial D_h(X)$ and $\partial X$ coincide and thus also those of $\mathcal{R}\cap\partial D_h(X)$ and $\mathcal{R}\cap\partial X$. Hence, $\mathcal{R}$ simply covers every curve with graph $\partial X$. In addition, $x,y\in \mathcal{R}$. All hypotheses of Theorem \ref{main} are thus satisfied and we can conclude that for all $x\in\mathcal{R}$ the following estimate applies:
\begin{align*}
&|\kappa(X,x)-\widehat{\kappa}^h(\partial D_h(X),e)|=|\kappa(X,x)-1/R|\\
&=|R-1/\kappa(x)|\cdot|\kappa(x)/R|\in
\mathcal{O}(d^{1/3}) \in \mathcal{O}(h^{1/3}).
\qedhere
\end{align*}
\end{proof}

\begin{theorem}\label{haupt2}
Suppose $\mathbb{X}$ is a nonempty family of compact subsets of $\mathbb{R}^2$. Further suppose that for all $X\in\mathbb{X}$ the curvature along the boundary $\partial X$ is bounded above and below by some positive constants. If the function $x\mapsto\kappa(X,x)$ is continuously differentiable at $x_0$, $\forall\,x_0\in\partial X$ with $\kappa(X,x_0)\neq 0$, then $\forall e\in\partial D_h(X)$ and $\forall y\in e$ with $\|x_0-y\|_1\leq h$, the following estimate holds:
\begin{align*}
|\widehat{\kappa}^h(\partial D_h(X),e)-\kappa(X,x_0)|\in\mathcal{O}(h^{1/3}).
\end{align*}
\end{theorem}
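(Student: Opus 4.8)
The plan is to reduce the claim to the convex situation already settled in Theorem~\ref{haupt} by localising the whole argument around $x_0$. First I would use the hypotheses $\kappa(X,x_0)\neq 0$ and continuous differentiability of $\kappa$ at $x_0$ (which in particular makes $\kappa$ continuous there): there is a sub-arc $\gamma_0=\gamma|_{[s_0-\delta,s_0+\delta]}$ of $\partial X$ around $x_0=\gamma(s_0)$ on which $\kappa$ has constant sign and $|\kappa|\geq c>0$; after reversing orientation if necessary we may assume $\kappa>0$ on $\gamma_0$, so $\gamma_0$ is a strictly convex arc whose curvature is continuously differentiable and bounded between two positive constants. Shrinking $\delta$ if necessary, I would also fix $\rho>0$ with $B(x_0,\rho)\cap\partial X\subset\gr\gamma_0$; this is possible because $\partial X\setminus\gamma\big((s_0-\tfrac{\delta}{2},s_0+\tfrac{\delta}{2})\big)$ is compact and does not contain $x_0$. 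Thus near $x_0$ the (possibly non-convex) set $X$ behaves exactly like a convex body.

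Next I would run, almost verbatim, the argument in the proof of Theorem~\ref{haupt}, but localised to $\gamma_0$. Given $h>0$ small, $e\in\partial D_h(X)$ and $y\in e$ with $\|x_0-y\|_1\leq h$, let $A_i$ be the MDCA to which $e$ is assigned, $\mathcal{K}(m,R)$ its smallest separating circle, $\theta_1,\theta_2$ the polar angles (about $m$) of its initial and terminal edges, and $\mathcal{R}=\mathcal{R}(m,R,\sqrt{2}h,\theta_1,\theta_2)$ the associated ring segment. As in Theorem~\ref{haupt}, the inner and outer pixels of $A_i$ lie in $\mathcal{R}$, and by the topology-coincidence result of \cite{topo} so does the arc of $\partial X$ threaded by $A_i$. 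The step that used global convexity of $X$ in Theorem~\ref{haupt} — that $\mathcal{R}$ simply covers the boundary curve — must now be replaced by the following three facts, valid for $h$ small: the arc of $\partial X$ underlying $A_i$ is contained in $\gr\gamma_0$; it is strictly convex; and $\mathcal{R}$ simply covers it. I would deduce the containment from a radial-deviation estimate of the type behind Lemma~\ref{lem14}: a DCA of thickness $\sqrt{2}h$ forces its underlying arc of $\partial X$ to stay within $\sqrt{2}h$ of a circle of radius $R$, which at points where $\kappa$ vanishes or changes sign becomes impossible once $h$ is small, so the underlying arc cannot leave the convex stretch $\gr\gamma_0$ (after enlarging $\gamma_0$ slightly to absorb an $o(1)$ overshoot); the convexity of $\gamma_0$ then makes every ray from $m$ meet this sub-arc exactly once inside $\mathcal{R}$, giving the simple covering, and since $\|x_0-y\|_1\leq h<\sqrt{2}h$ with $e\subset\mathcal{R}$ one also gets $x_0\in\mathcal{R}$ for $h$ small.

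With these points in place, all hypotheses of Theorem~\ref{main} hold on the convex arc $\mathcal{R}\cap\gr\gamma\subset\gr\gamma_0$, so $|R-1/\kappa(x)|\in\mathcal{O}(d^{1/3})=\mathcal{O}(h^{1/3})$ for every $x\in\mathcal{R}\cap\gr\gamma$, in particular for $x=x_0$; multiplying by the bounded factor $|\kappa(x_0)/R|$ exactly as at the end of the proof of Theorem~\ref{haupt} yields
\[
|\widehat{\kappa}^h(\partial D_h(X),e)-\kappa(X,x_0)|=\big|1/R-\kappa(X,x_0)\big|\in\mathcal{O}(h^{1/3}).
\]
I expect the main obstacle to be precisely the localisation claim: showing that, as $h\to 0$, the MDCA assigned to an edge near $x_0$ — together with its ring segment — cannot become long enough or drift far enough to leave the convex neighbourhood $\gr\gamma_0$, so that the convexity-dependent part of the proof of Theorem~\ref{haupt} survives unchanged. (This requires a little care because the edge $e$ to which the estimator assigns $A_i$ need not itself lie on $A_i$; one uses that the central edges of consecutive MDCAs interleave, so that either the MDCAs near $x_0$ are short and hence close to $x_0$, or the curve is nearly circular there and the estimate is trivially good.) Once this is granted, the remainder is a routine re-run of Theorem~\ref{haupt} and Theorem~\ref{main} with $\partial X$ replaced by the local arc $\gamma_0$.
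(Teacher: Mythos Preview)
Your proposal is correct and follows essentially the same route as the paper: localise to a convex sub-arc $\gamma_0$ around $x_0$ using $\kappa(X,x_0)\neq 0$ and continuous differentiability, argue that for $h$ small the MDCA and its ring segment $\mathcal{R}^h$ stay inside this convex neighbourhood (the paper does this in one line by invoking Lemma~\ref{lem14} to say the MDCA length tends to $0$), and then rerun the proof of Theorem~\ref{haupt}/Theorem~\ref{main} on that arc. Your write-up is more explicit about the localisation step and even flags the subtlety that $e$ need not itself lie on $A_i$, which the paper passes over silently.
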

\begin{proof}
Assume that $X\in\mathbb{X}$ and $x_0\in\partial X$ are satisfying the hypotheses set forth in the theorem. Moreover, for an $h>0$,  let $e\in D_h(X)$ and $y\in e$ with $\|x_0-y\|_1<h$ be given. As in the proof of Theorem \ref{haupt} we define an $(m,R,\sqrt{2}h,\theta_1,\theta_2)$-ring $\mathcal{R}^h$ depending on $h$. As $\kappa(X,x_0)\neq 0$ and the mapping $x\mapsto\kappa(X,x)$ is continuously differentiable at $x_0$, there exists a neighborhood $U$ of $x_0$ such that $U\cap \partial X$ is the graph of a convex curve with strictly positive or negative curvature. Assume w.l.o.g. that the curvature is strictly positive. From Lemma \ref{lem14} we infer that the length of a maximal digital arc goes to zero as $h\to 0$. Therefore, there exists an $h_0>0$ so that for all $h\leq h_0$ the intersection $\mathcal{R}^h\cap \partial X$ is the graph of a convex curve with strictly positive and continuously differentiable curvature. The claim now follows from the arguments given in the proof of Theorem \ref{haupt}.
\end{proof}
\section{Experimental valuation}\label{ergeb}
In this section, we evaluate the MDCA and $\lambda$--MDCA curvature estimator. For this purpose, we use the following four segmented test objects, i.e.,  planar curves for which the exact curvature profiles can be computed explicitly. Except for the ellipse, these planar curves are not globally convex. However, they are taken as piecewise convex curves. In concave regions, we consider the inversely parametrized curve.

\begin{figure}[h!]
\centering
\subfigure[Ellipse]{\includegraphics[width=0.2\textwidth]{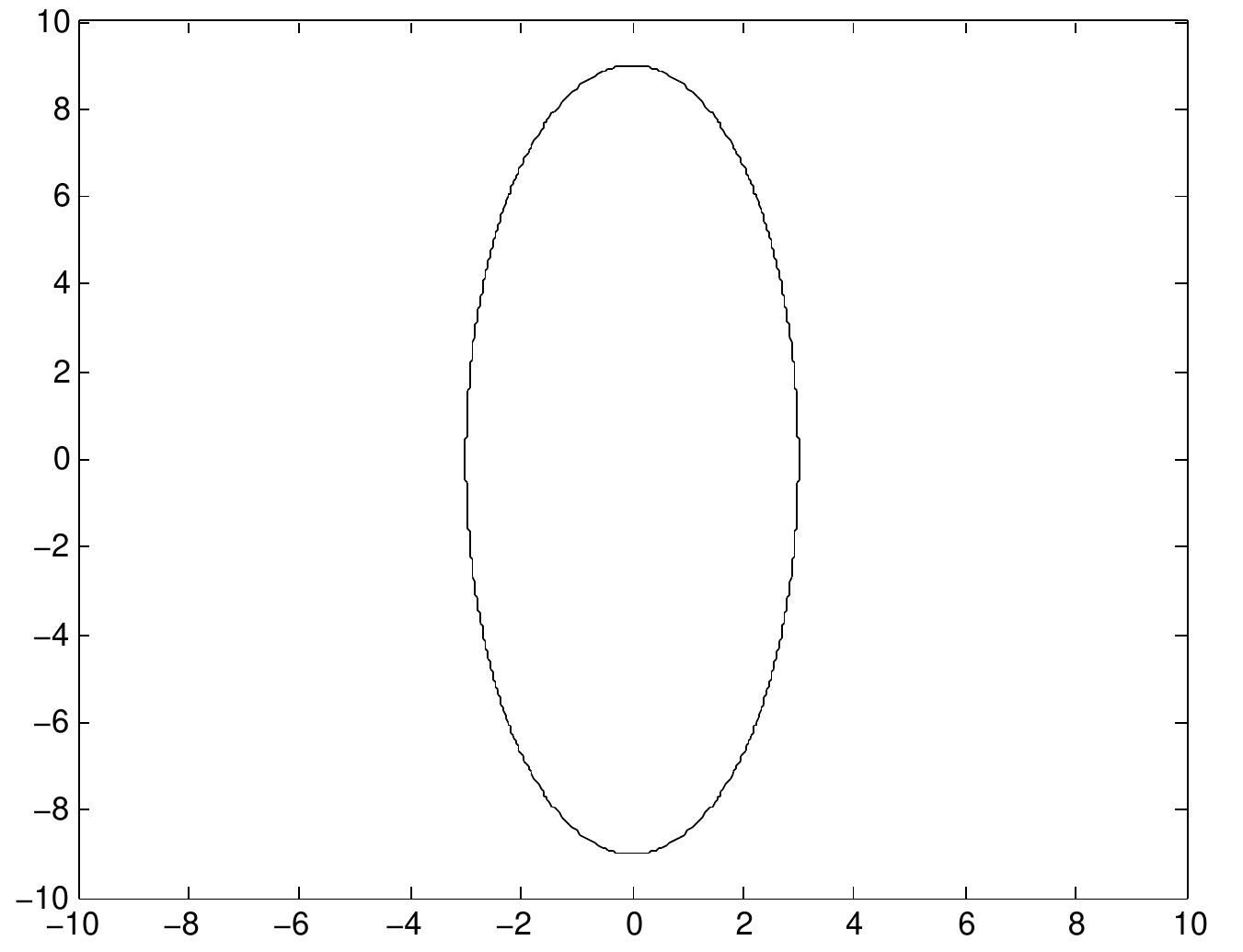}}
\subfigure[Gummy Bear]{\includegraphics[width=0.2\textwidth]{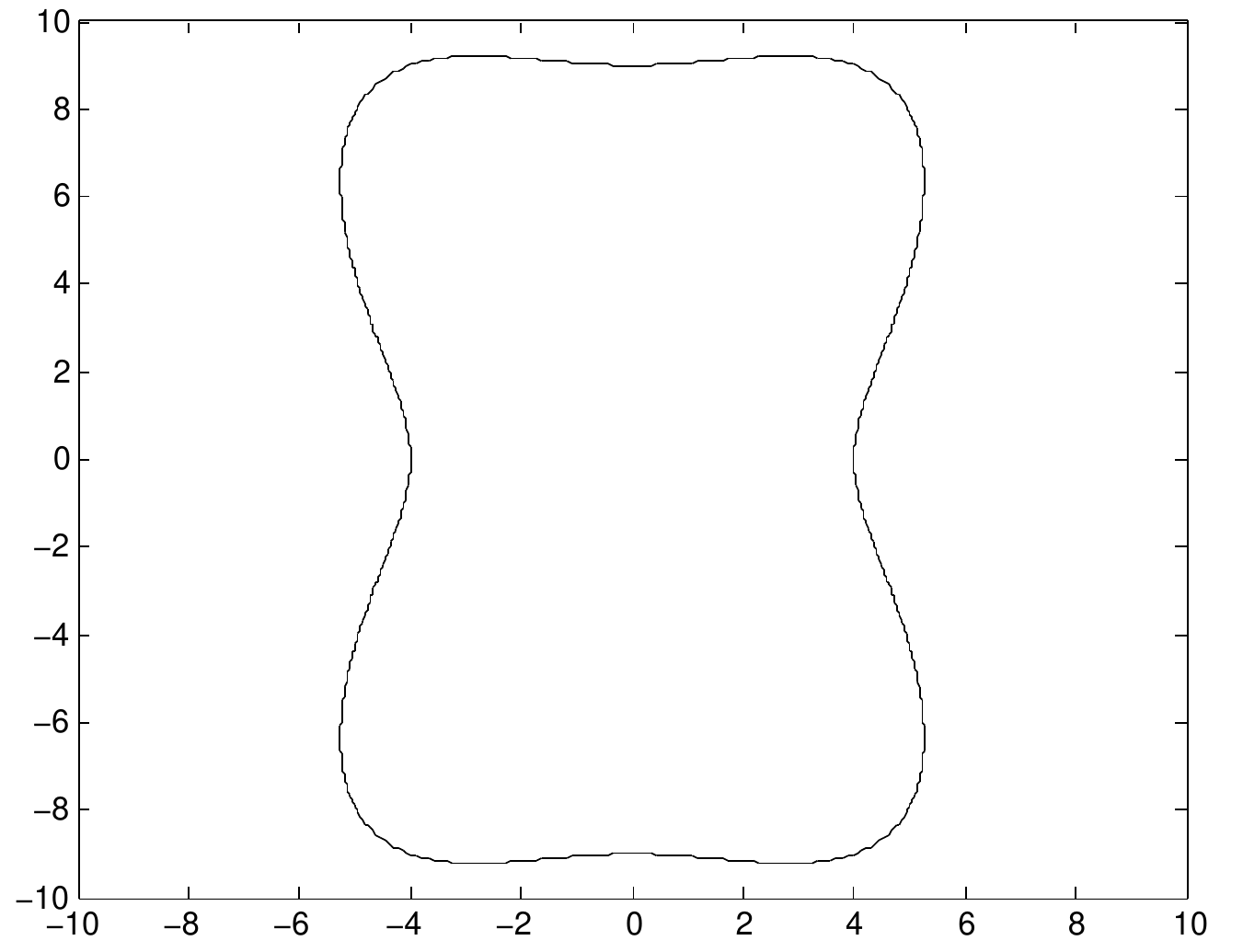}}
\subfigure[Hour Glass]{\includegraphics[width=0.2\textwidth]{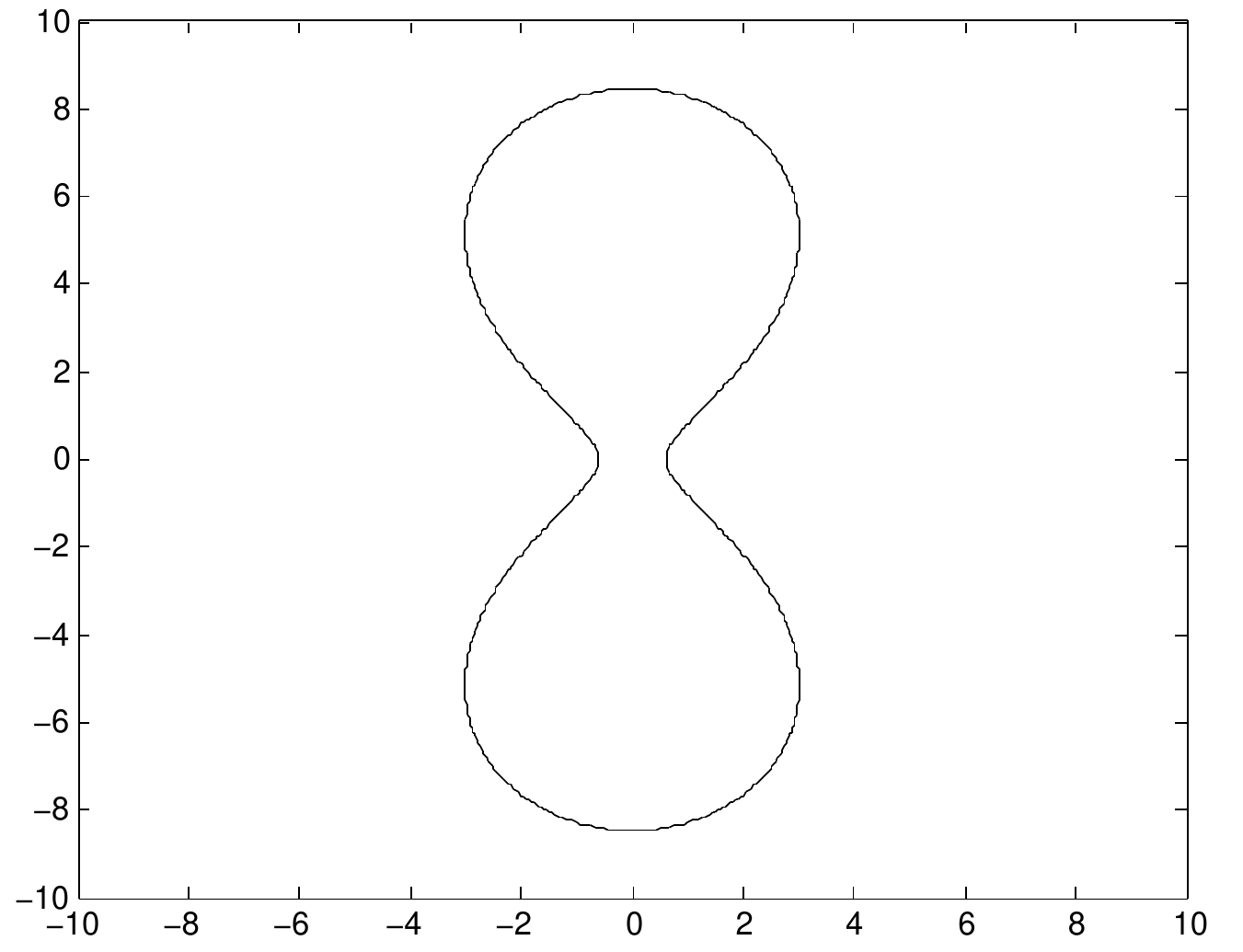}}
\subfigure[Rhombus]{\includegraphics[width=0.2\textwidth]{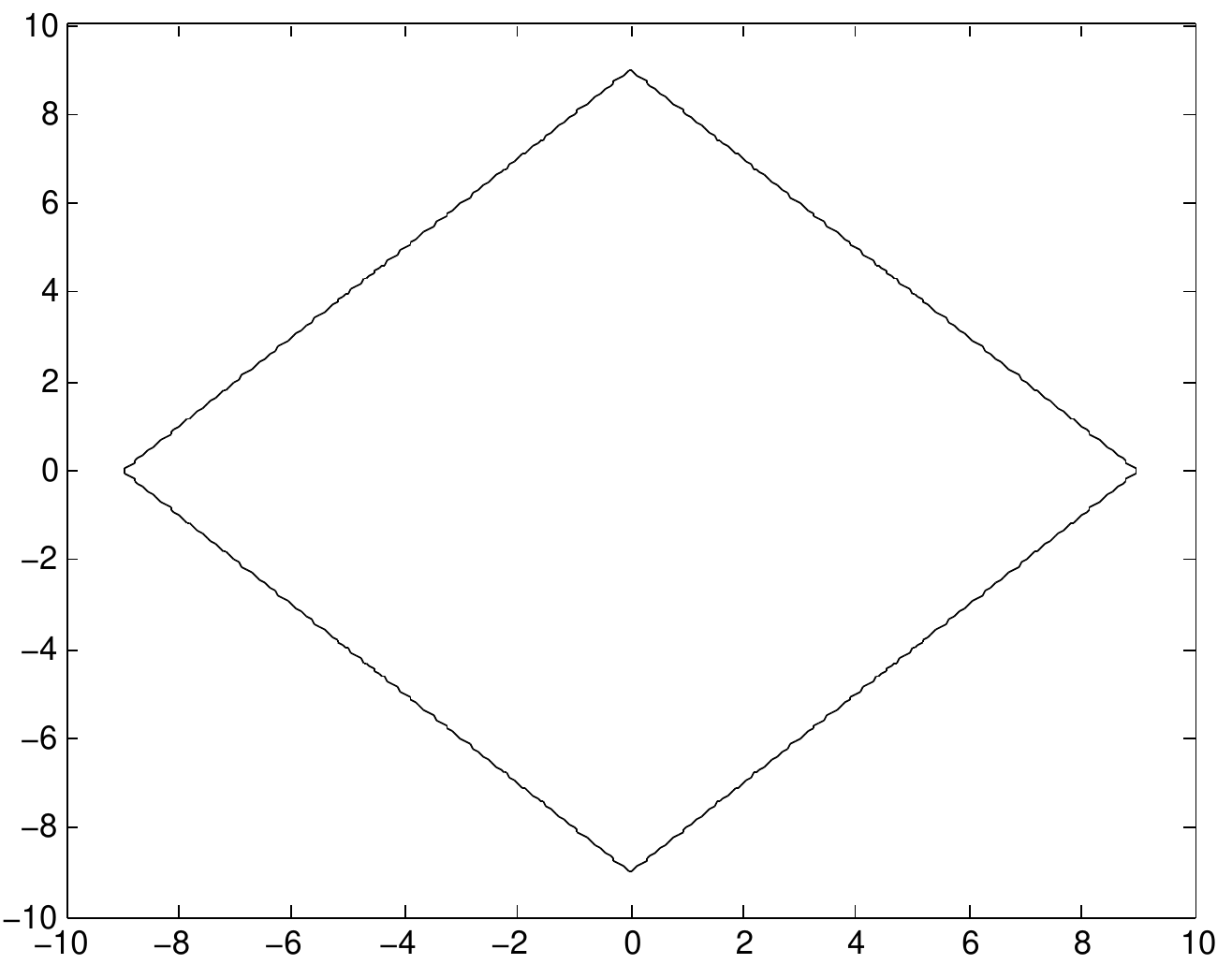}}
\caption{Test Objects (a)--(b) fulfill the requirements of Theorem \ref{haupt2}, whereas (d) does not, since it has corners.}\label{exobjects}
\end{figure}

The first test object (a) is an ellipse 
$$
E:=\left\{\left({{x}\atop{y}}\right)\in\mathbb{R}^2\;\mid\;\frac{x^2}{a^2}+\frac{y^2}{b^2}= 1\right\}
$$
with semi axes $a=9$ and $b=3$.
The second test object, which was termed ``gummy bear,'' is represented by the point set
\begin{align*}
G:=\left\{\left({{x}\atop{y}}\right)\in\mathbb{R}^2\;\mid\;\left(\frac{x}{3}\right)^4+\left(\frac{y}{2}\right)^4-x^2-y^2= 1\right\},
\end{align*}
and the third object, named ``hour glass,'' by the point set
\begin{scriptsize}
\begin{align*}
S := & \left\{ \left({{x}\atop{y}}\right)\in\R^2\;|\; \left(3.96+\left(\frac{x}{3}\right)^2+\left(\frac{y}{3}\right)^2\right)^2
-15.84\cdot\left(\frac{x}{3}\right)^2= 16\right\}.
\end{align*}
\end{scriptsize}
We consider the object (d), the rhombus, in more detail in Section \ref{limits}.

The exact curvature profile along the curve $\gamma_X$ with graph $\gr \gamma_X = X$ for the each test object  $X\in\{E,G,S\}$ can be explicitly computed by means of the following formula (see, e.g.  \cite{docarmo,kuehnel,montiel}):
\begin{align*}
\kappa=\frac{g_{xx}g_y-2g_{xy}g_xg_y+g_{yy}g_x}{(g_x^2+g_y^2)^{3/2}},
\end{align*}
where $X$ satisfies the implicit equation $g(x,y) = 0$.

\subsection{Experimental Error}
The four test objects are Gau\ss--discretized (see Section \ref{sec imaging}) using $\,h=2^{-n}$ with $n\in\{0,\dots,6\}$. Next, the MDCA and $\lambda$--MDCA curvature estimators are computed and compared to the exact curvature calculation. In order to compare the error at pixel edge $e$ with the exact value, we consider the point $p'=(x',y')^{T}\in \partial X$ whose distance to the midpoint of $e$ is minimal. The absolute error at $e$ for resolution $h$ is then given by
\begin{align*}
\epsilon_{\mathrm{abs}}^h(e):=|\kappa(X,p')-\widehat{\kappa}_{\mathrm{MDCA}}^h(\partial D_h(X),e)|.
\end{align*}
We define the average error by
\begin{align*}
\epsilon_{\mathrm{av}}^h:= \frac1{|D_h(X)|}\sum_{e\in D_h(X)}\epsilon_{\mathrm{abs}}^h(e).
\end{align*}
Here, $|D_h(X)|$ denotes the number of edges of the simple closed digital curve whose graph is $D_h(X)$. Furthermore, we define the maximal error as
\begin{align*}
\epsilon_{\mathrm{\max}}^h:= \max_{e\in D_h(X)}\epsilon_{\mathrm{abs}}^h(e).
\end{align*}

In Figures \ref{errorgraph1} and \ref{errorgraph2} the average and maximal errors are plotted against the resolution $h$. The plots are double logarithmically scaled. Using this particular scaling of the axes, we see that the slope of the regression line for the error values equals the experimental  speed of convergence:
\begin{align*}
\log (\epsilon)=m\cdot \log (h)+ {C}&\Longleftrightarrow \log (\epsilon)=\log (h^m)+{C}\\& \Longleftrightarrow \epsilon={C}\cdot h^m,
\end{align*}
where $C\in \R$.
\begin{figure}[h!]
\begin{center}
\includegraphics[width=0.45\textwidth]{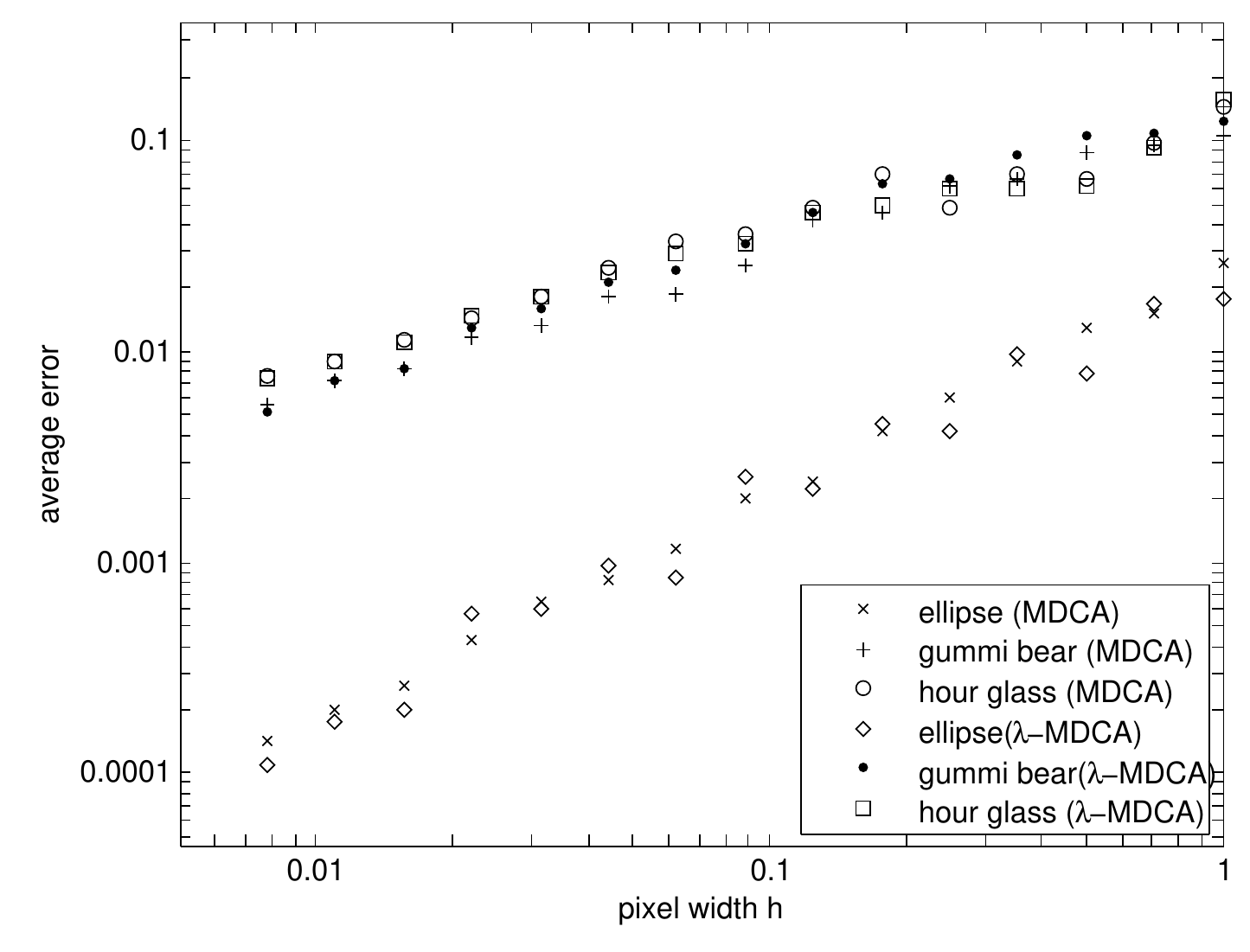}
\caption{Average Error for the MDCA and the $\lambda$-MDCA curvature estimators evaluated at the smooth test objects for varying pixel width $h$.}\label{errorgraph1}
\end{center}
\end{figure}
\begin{figure}[h!]
\begin{center}
\includegraphics[width=0.45\textwidth]{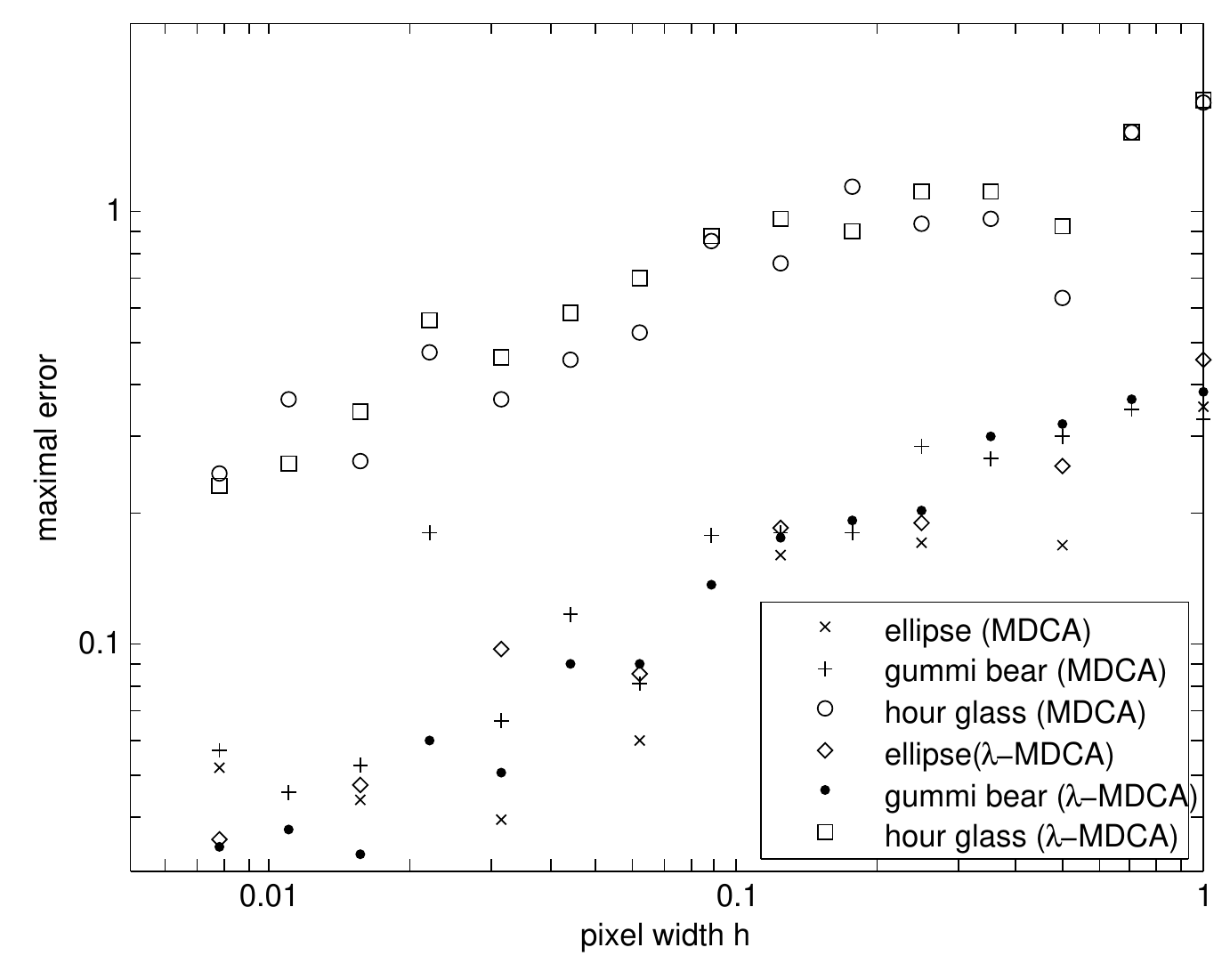}
\caption{Maximal Error for the MDCA and the $\lambda$-MDCA curvature estimators evaluated at the smooth test objects for varying pixel width $h$. The $\lambda$-MDCA curvature estimator shows a higher order of convergence than the standard MDCA-estimator; compare also with Table \ref{errtab}.}\label{errorgraph2}
\end{center}
\end{figure}

The plots indicate that for the average error there is hardly any difference between the $\lambda$--MDCA and the MDCA curvature estimator. However, when the maximum error is considered the $\lambda$--MDCA curvature estimator exhibits a faster speed of convergence. For the calculations, the weight function $\lambda(t)=-t\log(t)-(1-t)\log(1-t)$ was used.

The experimental convergence speeds for the three test objects are summarized in Table \ref{errtab}. One observes the following: Firstly, the convergence of the maximal error for the test object ``hour glass'' is the slowest with $\mathcal{O}(h^{0.36})$ and this value is already close to the theoretical limit of $\mathcal{O}(h^{\frac13})$. Secondly, the convergence of the average error for the test object ``ellipse'' is even faster than $\mathcal{O}(h)$.
\begin{table}[h!] 
\centering
\renewcommand{\arraystretch}{1.5}
\begin{tabular}{|l|ccc|}
\hline
&Ellipse&Gummy Bear &Hour Glass\\ \hline
Avg. Error (MDCA)&$\mathcal{O}(h^{1.08})$&$\mathcal{O}(h^{0.64})$&$\mathcal{O}(h^{0.57})$\\ 
Max. Error (MDCA)&$\mathcal{O}(h^{0.44})$&$\mathcal{O}(h^{0.42})$&$\mathcal{O}(h^{0.36})$\\ 
Avg. Error ($\lambda$-MDCA)&$\mathcal{O}(h^{1.06})$&$\mathcal{O}(h^{0.68})$&$\mathcal{O}(h^{0.56})$\\ 
Max. Error ($\lambda$-MDCA)&$\mathcal{O}(h^{0.50})$&$\mathcal{O}(h^{0.56})$&$\mathcal{O}(h^{0.37})$\\ \hline
\end{tabular}
\smallskip
\caption{Experimental Speed of Convergence}\label{errtab}
\end{table}

\subsection{Limits of the Method}\label{limits}
A test object for which the MDCA curvature estimator performs very badly is the rhombus (see Figure \ref{exobjects} (d)). It is represented by the point set
\[
Q:=\{(x,y)^T\in\R^2\;|\; |x|+|y|\leq 9\}.
\]
Every boundary point of the set $Q$ was excluded in Theorem \ref{haupt2}: At the four corners of the rhombus, the curvature is not defined, i.e., unbounded, and for the remaining boundary points the curvature is equal to zero. Hence, it is not possible to employ Theorem \ref{haupt2} to prove convergence to the actual curvature $\kappa(Q,x)$. This is illustrated in Figure \ref{corners} which shows that for a resolution of $h = 2^{-2}$ the curvature is badly approximated in a neighborhood of the corners. 

The reason for this discrepancy lies in the fact that the entire side of the rhombus is one MDCA and that the midpoint of the subsequent MDCA is located next to the corner. Hence, all points on the side of the rhombus that are closer to a corner than to the midpoint of the side use the MDCA at the corner. For this MDCA, however, the curvature is substantially larger than zero. From the mathematical point of view, the curvature of the rhombus is not bounded below by a positive constant, thus the requirements of Theorem \ref{haupt2} are not met.

The $\lambda$--MDCA curvature estimator circumvents this problem by assigning higher weights to the digital arcs when a boundary point moves to a more central position. As an MDCA at the side of the rhombus is significantly longer than one at a corner, it is weighted higher and therefore a better result is obtained; see Figure \ref{corners2}.

\begin{figure}[h]
\centering
\subfigure[Graphical Representation]{\includegraphics[width=0.24\textwidth]{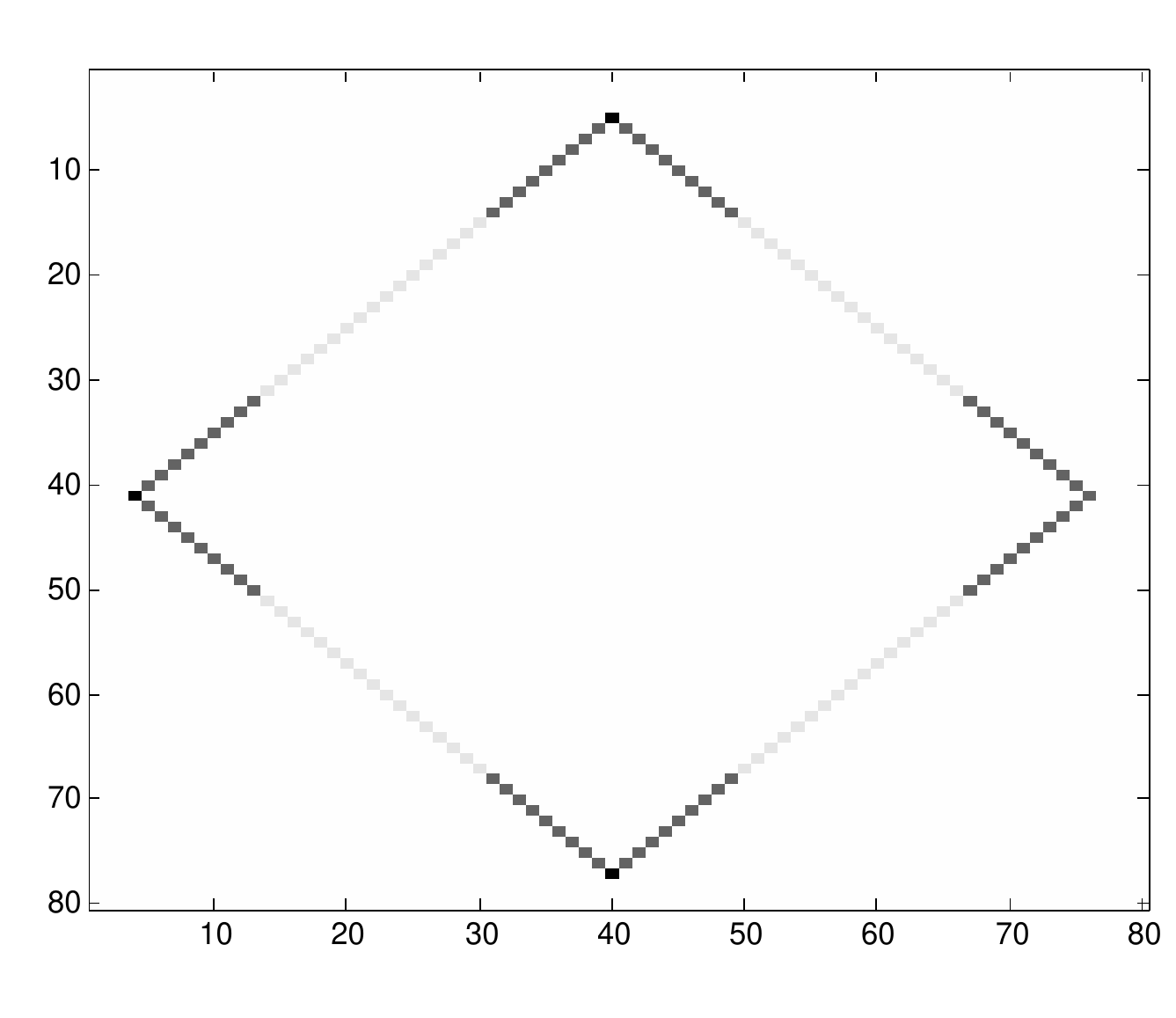}}
\subfigure[Curvature Profile]{\includegraphics[width=0.24\textwidth]{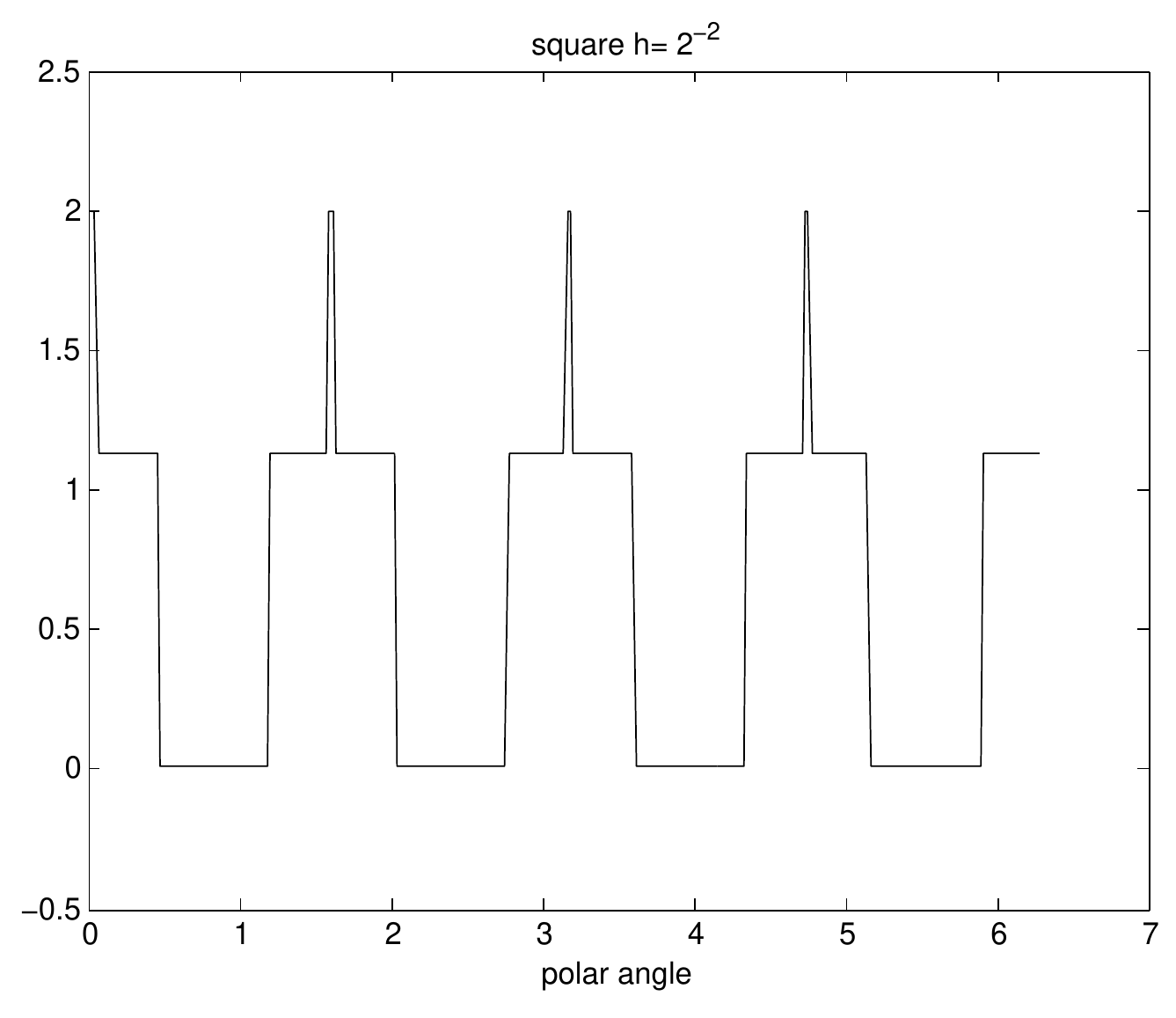}}
\caption{MDCA Curvature Estimator for the Rhombus}\label{corners}
\end{figure}

\begin{figure}[h]
\centering
\subfigure[Graphical Representation]{\includegraphics[width=0.24\textwidth]{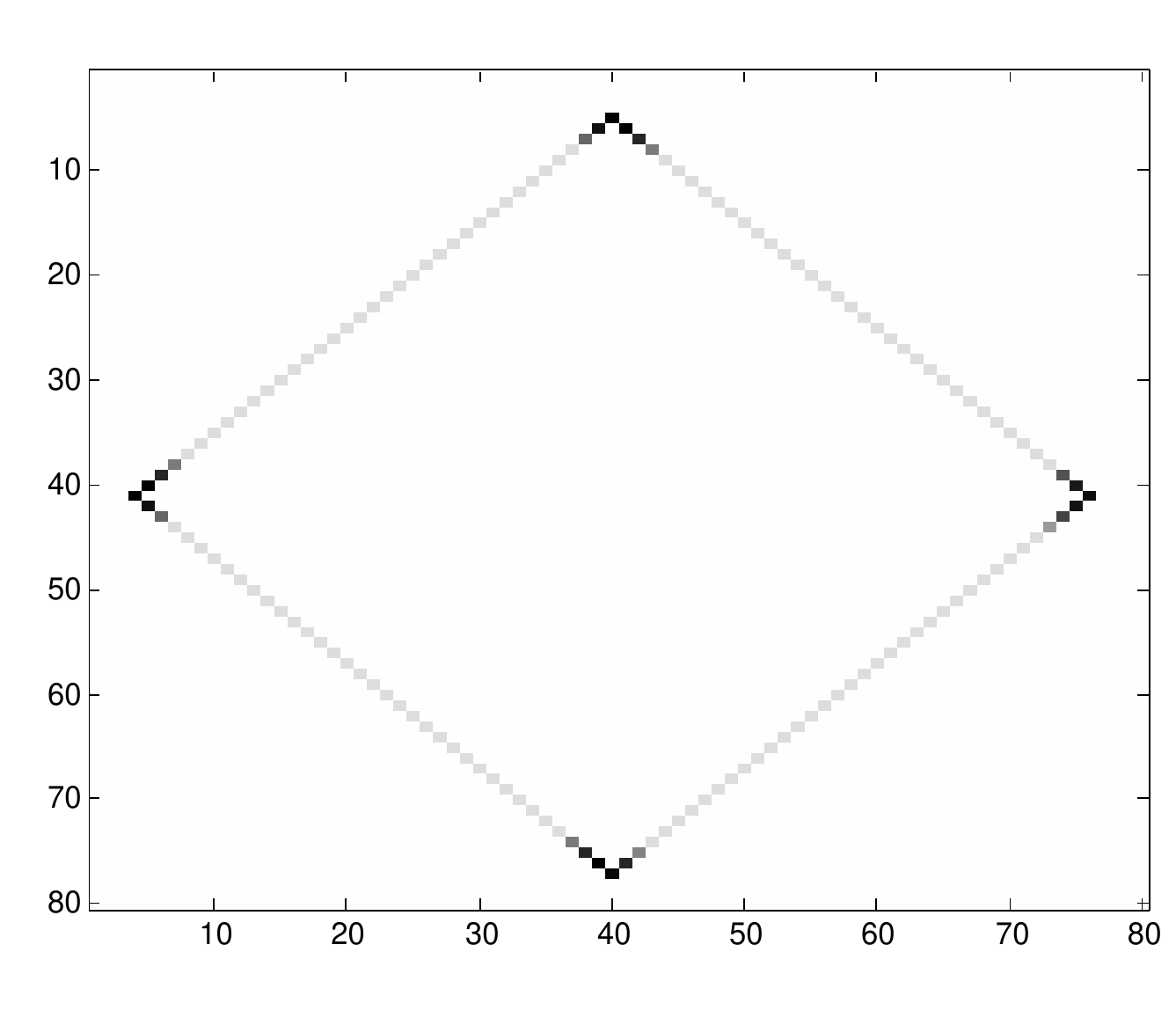}}
\subfigure[Curvature Profile]{\includegraphics[width=0.24\textwidth]{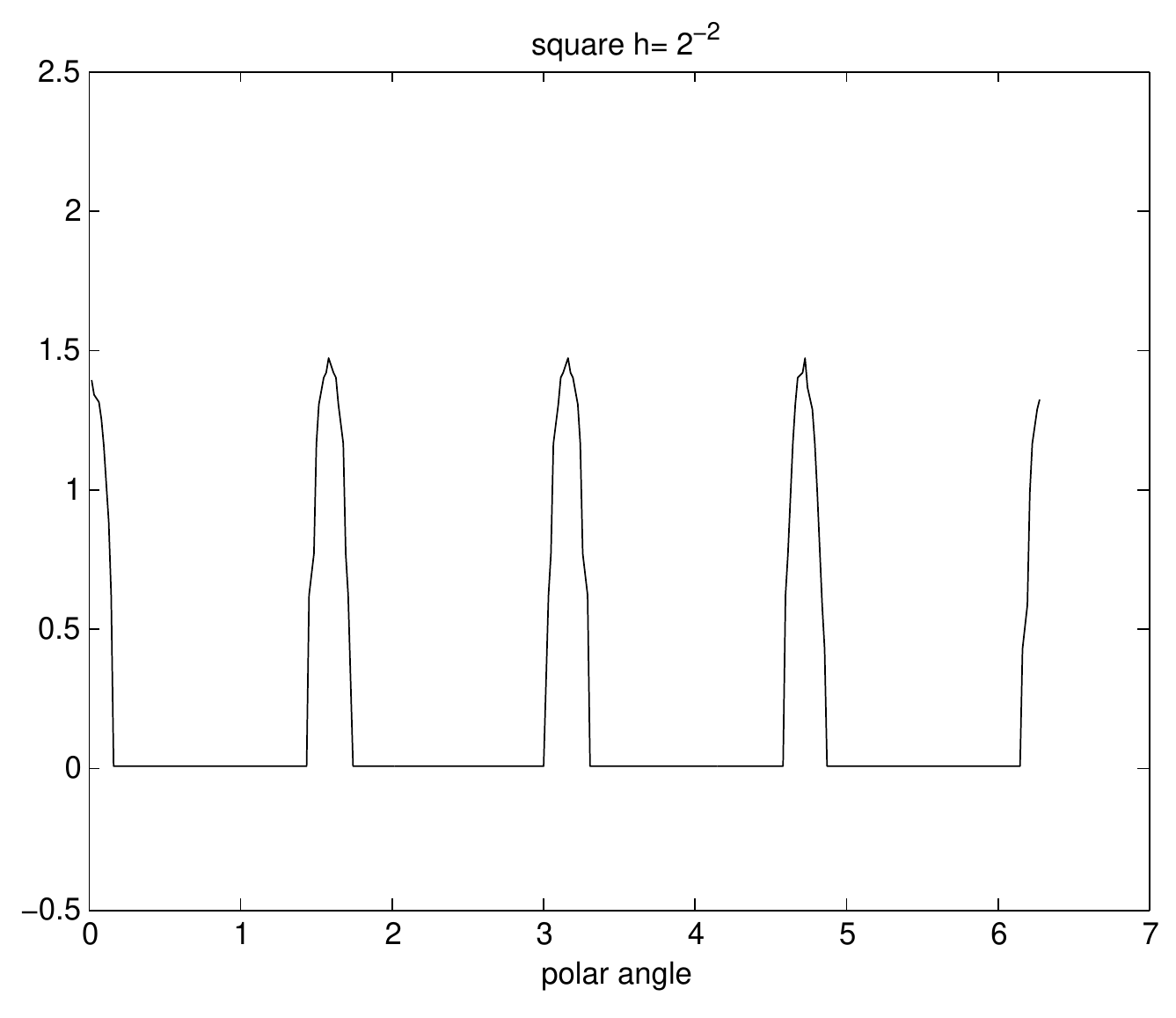}}
\caption{$\lambda$--MDCA Curvature Estimator for the Rhombus}\label{corners2}
\end{figure}
%
\section{Conclusion}
\label{sec conclusions}
We analyzed the parameter-free MDCA curvature estimator presented in \cite{rouss}, which is based on maximal digital circular arcs (MDCA). In addition, we extended this estimator to the new $\lambda$-MDCA curvature estimator, which is smoother and does also not require any parameter. Especially around corners this estimator provides better results then the MDCA estimator.

For convex sets with positive, continuous curvature profile we provided a proof for its uniform multigrid convergence as defined in \cite{klette} with convergence speed of $\mathcal{O}(h^{\frac13})$. 
In an experimantal evaluation we observed that this speed of convergence is almost reached for some sets and therefore 
seems to be the optimal upper bound.

The respective results on the optimal multigrid convergence rate for $\lambda$-MDCA curvature estimator are still an open question.

%
%
%
%
%
%

\ifCLASSOPTIONcaptionsoff
  \newpage
\fi



%

\bibliographystyle{IEEEtran}
\bibliography{curvature}

\vfill


\end{document}